\documentclass[11pt,reqno,a4paper,english]{amsart}

\usepackage[utf8]{inputenc}  
\usepackage{dsfont}
\usepackage{xcolor}

\usepackage{enumerate}          
\usepackage{amssymb,amsmath,amsfonts,amsthm}            
\usepackage{pdfsync}
\usepackage{url}
\usepackage{hyperref}
\usepackage{anysize}
\usepackage{mathrsfs}
\usepackage{graphicx,float,color}
\usepackage{epsfig}
\usepackage{tikz}

\begin{document}
\theoremstyle{plain}
\newtheorem{theorem}{Theorem}[section]
\newtheorem{definition}[theorem]{Definition}
\newtheorem{proposition}[theorem]{Proposition}
\newtheorem{lemma}[theorem]{Lemma}
\newtheorem{corollary}[theorem]{Corollary}
\newtheorem{conjecture}[theorem]{Conjecture}
\newtheorem{remark}[theorem]{Remark}
\numberwithin{equation}{section}
\errorcontextlines=0

\renewcommand{\P}{\mathcal{P}}
\newcommand{\Id}{\text{Id}}
\newcommand{\R}{\mathbb{R}}
\newcommand{\M}{\mathbf{H}^m}
\newcommand{\Op}{{\rm Op}}
\newcommand{\J}{\mathcal{J}}
\newcommand{\Z}{\mathbb{Z}}
\newcommand{\N}{\mathbb{N}}
\newcommand{\supp}{\text{supp}}
\newcommand{\ord}{\text{ord}}
\newcommand{\e}{\varepsilon}
\newcommand{\bqn}{\begin{equation}}
\newcommand{\eqn}{\end{equation}}
\newcommand{\al}{\alpha}
\newcommand{\tor}{(2\mathbb{T})^{2}}
\newcommand{\er}{{\vec{e_{r}}}}
\newcommand{\et}{{\vec{e_{\theta}}}}
\newcommand{\red}{\color{red}}
\newcommand{\black}{\color{black}}
\renewcommand{\i}{\iota}
\newcommand{\todo}[1]{$\clubsuit$ {\tt #1}}
\newcommand{\B}{\mathcal{B}}
\newcommand{\BST}{\textbf{(BST) }}
\newcommand{\BSCT}{\textbf{(BSCT)}}
\newcommand{\Green}{\textbf{(Green)}}
\newcommand{\minmult}{\min_n}
\newcommand{\dproj}{\widetilde{\Pi}_I}
\newcommand{\TV}{\rm TV}

\setcounter{tocdepth}{1}

\title[Delocalized eigenvectors of transitive graphs]{Delocalized eigenvectors of transitive graphs\\ and beyond}
%
\author{Nicolas Burq}
\address{Nicolas Burq. Laboratoire de Math\'ematiques d'Orsay, Universit\'e
  Paris-Sud, Universit\'e Paris-Saclay, B\^atiment~307, 91405
  Orsay Cedex \& CNRS UMR 8628 \& Institut Universitaire de France}
 \email{nicolas.burq@universite-paris-saclay.fr}

\author{Cyril Letrouit}
\address{Cyril Letrouit. Laboratoire de Math\'ematiques d'Orsay, Universit\'e
  Paris-Sud, Universit\'e Paris-Saclay, B\^atiment~307, 91405
  Orsay Cedex \& CNRS UMR 8628}
 \email{cyril.letrouit@universite-paris-saclay.fr}

\date{\today}
\maketitle

\begin{abstract}
We prove delocalization properties for eigenvectors of vertex-transitive graphs via elementary estimates of the spectral projector. We recover in this way known results which were formerly proved using representation theory.\\ Similar techniques show that for general symmetric matrices, most approximate eigenvectors spectrally localized in a given window containing sufficiently many eigenvalues are delocalized in $L^q$ norms. Building upon this observation, we prove a delocalization result for approximate eigenvectors of large graphs containing few short loops, under an assumption on the resolvent which is verified in some standard cases, for instance random lifts of a fixed base graph.
\end{abstract}

\tableofcontents

\section{Introduction}
\subsection{Overview} 


Let $A$ be the adjacency matrix of a graph with vertex set $[n]=\{1,\ldots,n\}$. The subject of this paper is the spatial delocalization of the eigenvectors and approximate eigenvectors of $A$ in the limit of large $n$. To measure the delocalization of a vector $u\in\mathbb{C}^n$, we use the commonly considered quantities
$$
\alpha_q(u)=\frac{\|u\|_{L^q}}{\|u\|_{L^2}}
$$
for $q\in(2,+\infty]$ (see \cite{deluca}, \cite{fyodorov}, \cite{tikhonov}, \cite{bordenave}). Informally, a sequence $(u_n)_{n\in\N}$, $u_n\in\mathbb{C}^n$, is localized in the $L^q$ sense as $n\rightarrow +\infty$ if $\alpha_q(u_n) \asymp 1$ and completely delocalized if $\alpha_q(u_n)=n^{\frac1q -\frac12+o(1)}$.




The literature on localization/delocalization of eigenvectors of graphs and matrices is huge. Our purpose in this paper is to show that estimates on spectral projectors of $A$ coupled with concentration of measure arguments allow to prove delocalization for:
\begin{enumerate}
\item \textit{Eigenvectors} of adjacency matrices of \textit{vertex-transitive graphs} (e.g. Cayley graphs) and generalizations thereof.  
\item \textit{Most approximate eigenvectors} of general symmetric matrices; here approximate means that instead of satisfying $A_nu_n=\lambda_n u_n$, approximate eigenvectors satisfy $A_nu_n=\lambda_n u_n+o_{L^2}(\lambda_n u_n)$. In this context, delocalization holds with high probability when the approximate eigenvectors are chosen randomly in a fixed spectral window.
\end{enumerate}
Concerning Point 1, we recover known results (\cite{mageethomas}, \cite{zhao}) in a more direct way, avoiding in particular any use of representation theory. Concerning Point 2, our arguments show for instance that for adjacency matrices of large graphs satisfying two assumptions (on the number of small cycles and on the Green function of their local weak limits), linear combinations of $o(n)$ eigenvectors are completely delocalized with high probability.

We underline that our results on delocalization of \textit{(exact) eigenvectors} are restricted to vertex-transitive graphs. Very strong tools have been developed in the past 15 years for other classes of graphs and matrices, e.g., regular graphs, Erdös-Rényi graphs, expander graphs with few cycles, Wigner matrices and generalizations thereof, Lévy matrices, etc (see Section \ref{s:biblio}). Our results of Point 2 are reminiscent of \cite{vanhandel}, in which the authors exhibit a similar phenomenon of delocalization of \textit{approximate eigenvectors} in sparse Gaussian random matrices with quite general sparsity patterns.

\subsection{Delocalized random eigenbases of vertex-transitive graphs}
Our first results are concerned with vertex-transitive graphs, i.e., graphs for which the automorphism group\footnote{An automorphism of a graph $G = (V, E)$ is a permutation $\sigma$ of the vertex set $V$, such that if $u,v\in V$, then $(u, v)\in E$ if and only $(\sigma(u),\sigma(v))\in E$.} acts transitively on their vertices (e.g., Cayley graphs).

Given a graph $G$, we denote by $\mathcal{B}(G)$ the set of orthonormal bases of the adjacency matrix of $G$. We explain in Section \ref{s:symmetricgraphs} how to define a uniform probability measure $\nu$ on $\mathcal{B}(G)$.
\begin{theorem}[$L^q$-delocalized eigenbases]\label{t:trans}
There exists $C>0$ such that the following holds. Let $n\in\mathbb{N}^*$ and let $G$ be a vertex-transitive graph with $n$ vertices. Then for any $\Lambda>0$, with probability $\geq 1-n^{2-\log(\Lambda)}$ on the choice of an element $\mathcal{B}\in \mathcal{B}(G)$ picked following $\nu$, any $u\in\mathcal{B}$ verifies
\begin{equation}\label{e:Lqbasis}
\|u\|_{L^\infty}\leq C\Lambda \left(\frac{\log n}{n}\right)^{\frac12}.
\end{equation} 
Also, there exists $C>0$ such that for any $q\in[2,+\infty)$ and any $\Lambda>0$, with probability $\geq 1-n\Lambda^{-q}$ on the choice of an element  $\mathcal{B}\in \mathcal{B}(G)$ picked following $\nu$, any $u\in\mathcal{B}$ verifies
\begin{equation}\label{e:Lqbasisq}
\|u\|_{L^q}\leq C\Lambda\sqrt{q} n^{\frac1q-\frac12}.
\end{equation}
\end{theorem}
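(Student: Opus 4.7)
The plan is to exploit that, by vertex-transitivity, each spectral projector $\Pi_\lambda$ of $A$ has constant diagonal $m_\lambda/n$, which reduces the claim to classical concentration of a single coordinate of a Haar-random unit vector on a sphere.

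First, I would note that by the definition of $\nu$ recalled in Section~\ref{s:symmetricgraphs}, drawing $\mathcal{B}\sim\nu$ amounts to choosing, independently for each eigenvalue $\lambda$ of $A$, a Haar-random orthonormal basis of the eigenspace $E_\lambda$. Consequently, any fixed basis element $u\in\mathcal{B}\cap E_\lambda$ is Haar-uniform on the unit sphere of $E_\lambda$. Since the automorphism group of $G$ commutes with $A$ and hence with $\Pi_\lambda$, vertex-transitivity forces $x\mapsto\Pi_\lambda(x,x)$ to be independent of $x$; taking the trace gives $\Pi_\lambda(x,x)=m_\lambda/n$ for every $x\in[n]$, with $m_\lambda=\dim E_\lambda$.

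For a fixed vertex $x$, the identity $u(x)=\langle u,e_x\rangle=\langle u,\Pi_\lambda e_x\rangle$ (using $u\in E_\lambda$) shows that $u(x)$ has the same law as $\|\Pi_\lambda e_x\|\cdot v_1$, where $v$ is Haar-uniform on the unit sphere of $\mathbb{C}^{m_\lambda}$ and $v_1$ is its first coordinate. Substituting $\|\Pi_\lambda e_x\|^2=m_\lambda/n$ in the classical subgaussian estimates
\begin{equation*}
\mathbb{P}(|v_1|\geq s)\leq 2e^{-cm_\lambda s^2},\qquad \mathbb{E}|v_1|^q\leq (Cq/m_\lambda)^{q/2},
\end{equation*}
yields, uniformly in $\lambda$,
\begin{equation*}
\mathbb{P}(|u(x)|\geq t)\leq 2e^{-cnt^2},\qquad \mathbb{E}|u(x)|^q\leq (Cq/n)^{q/2}.
\end{equation*}

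To obtain \eqref{e:Lqbasis}, I would apply the first inequality with $t=C_0\Lambda\sqrt{\log n/n}$ and union-bound over the $n$ vertices and the $n$ basis vectors: for $C_0$ large enough the failure probability is at most $n^{2-\log\Lambda}$. To obtain \eqref{e:Lqbasisq}, transitivity gives $\mathbb{E}\|u\|_q^q=n\,\mathbb{E}|u(x_0)|^q\leq (Cq)^{q/2}n^{1-q/2}$, and Markov's inequality then produces $\mathbb{P}(\|u\|_q\geq C'\Lambda\sqrt{q}\,n^{1/q-1/2})\leq\Lambda^{-q}$ after rescaling $\Lambda$ to absorb the constant; a final union bound over the $n$ basis vectors of $\mathcal{B}$ gives the claimed $n\Lambda^{-q}$. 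The only conceptually delicate step is the initial reduction: the observation that vertex-transitivity alone, without any Cayley structure or representation theory, pins down $\Pi_\lambda(x,x)$ and reduces the problem to a well-known one-dimensional marginal of a uniform vector on the sphere.
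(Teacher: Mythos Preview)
Your proposal is correct and follows essentially the same strategy as the paper: both hinge on the observation that vertex-transitivity forces $\Pi_\lambda(x,x)=m_\lambda/n$, after which everything reduces to standard moment and tail bounds for a single coordinate of a Haar-uniform unit vector, followed by Markov and union bounds. The only minor difference is in the $L^\infty$ step: the paper bounds $\|u\|_{L^\infty}\leq\|u\|_{L^q}$ and takes $q=\log n$ in the moment estimate, whereas you use the pointwise subgaussian tail $\mathbb{P}(|u(x)|\geq t)\leq 2e^{-cnt^2}$ directly and union-bound over the $n$ vertices; your route naturally yields a failure probability of order $n^{2-c\Lambda^2}$, which dominates the stated $n^{2-\log\Lambda}$ once the constant is chosen appropriately, so this is a cosmetic rather than substantive variation.
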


The first part \eqref{e:Lqbasis} has already been proved in \cite[Theorem 1.4]{zhao} but we give a much shorter proof avoiding representation theory. We also provide in Theorem \ref{t:product} an extension of Theorem \ref{t:trans} to the case of products of two graphs, when one of the two graphs is vertex-transitive. 

It is proved in \cite[Theorem 1.2]{zhao} that there exist infinitely many Cayley graphs $G$ whose adjacency matrix has an eigenspace all of whose eigenvectors $u:G\rightarrow \mathbb{C}$ satisfy $\|u\|_{L^\infty} \geq \|u\|_{L^2}(\log \log n)^{-1}\sqrt{\log n / n}$, where $n$ is the number of vertices and $c > 0$ is some absolute constant. This shows that Theorem \ref{t:trans} is almost sharp: for instance, we cannot replace the right-hand side in \eqref{e:Lqbasis} by $\|u\|_{L^\infty}\lesssim n^{-1/2}$, which would be the best bound we could hope for.

Our second result shows that  in sufficiently large eigenspaces of vertex-transitive graphs, the statistics of the entries of random eigenvectors are Gaussian. Our result is stated in terms of the bounded Lipschitz distance  $d_{\text{BL}}$ between probability measures $\mu,\nu$ on $\R$, defined as
$$
d_{\rm BL}(\mu,\nu)=\sup_{\|f\|_{\rm BL}\leq 1} \left|\int_{\R} fd\nu - \int_{\R} fd\mu\right|
$$
where
$$
\|f\|_{\rm BL}=\max\left\{\|f\|_{L^\infty},\ \sup_{x\neq y}\frac{|f(x)-f(y)|}{|x-y|}\right\}.
$$
This distance metrizes the weak convergence of probability measures.
\begin{theorem}[Gaussian statistics]\label{t:Gaussianstat}
Let $G$ be a vertex-transitive graph whose set $V$ of vertices has $n$ elements, and let $E$ be an eigenspace of the adjacency matrix of $G$ with dimension $m$. Let $u\in\R^n$ be an eigenvector chosen according to the uniform probability measure $\mathbb{P}$ on the unit sphere of $E$. Set
$$
\mu=\frac1n \sum_{i\in V} \delta_{\sqrt{n}u_i}.
$$
Then for any $\varepsilon$ such that $\max\left(\left(\frac{192 \pi}{\sqrt{m-1}}\right)^{2/5},\frac{4}{m-1}\right)\leq \varepsilon\leq 1$, 
$$
\mathbb{P}\left[d_{BL}(\mu,\mathcal{N}(0,1))>\varepsilon\right]\leq \frac{48\sqrt{\pi}}{\varepsilon^{3/2}} \exp\left(-c(m-1)\varepsilon^5\right)
$$ 
where $c=3^{-2}2^{-16}$ and $\mathcal{N}(0,1)$ denotes the standard Gaussian.
\end{theorem}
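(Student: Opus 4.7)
The plan is to parametrise the random eigenvector via an arbitrary orthonormal basis $(e_1,\ldots,e_m)$ of $E$, writing $u=\sum_{j=1}^m g_j e_j$ where $g=(g_1,\ldots,g_m)$ is uniform on the Euclidean sphere $S^{m-1}\subset\R^m$. The only input from vertex-transitivity is that the orthogonal projector $P_E$ onto $E$ commutes with the action of $\mathrm{Aut}(G)$, and therefore has constant diagonal equal to $m/n$; in particular $\sum_{j=1}^m|e_j(i)|^2=m/n$ for every vertex $i$. Setting $w_i:=\sqrt{n/m}\,(e_1(i),\ldots,e_m(i))\in S^{m-1}$, one obtains the key rewriting $\sqrt n\,u_i=\sqrt m\,\langle g,w_i\rangle$. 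By rotation invariance of the uniform measure on $S^{m-1}$, each $\sqrt n\,u_i$ has the same one-dimensional marginal law $\nu_m$ as $\sqrt m\,g_1$, and a direct Stirling comparison of densities (or the coupling $g=X/|X|$ with $X\sim\mathcal{N}(0,I_m)$) gives $d_{\mathrm{BL}}(\nu_m,\mathcal{N}(0,1))=O(1/m)$.

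Next, for any $1$-Lipschitz $f:\R\to\R$ bounded by $1$, I would prove that the map $F_f(g):=\int f\,d\mu=\tfrac1n\sum_{i\in V}f(\sqrt m\,\langle g,w_i\rangle)$ is $1$-Lipschitz on $S^{m-1}$. A direct gradient computation yields $|\nabla F_f(g)|^2=\tfrac{m}{n^2}(f')^{T} W f'$, where $W_{ii'}=\langle w_i,w_{i'}\rangle$ is the Gram matrix of the $w_i$'s. Since $W$ has the same nonzero spectrum as $\sum_i w_iw_i^T=(n/m)P_E$, its operator norm equals $n/m$; combined with $\|f'\|_\infty\leq 1$ this gives $|\nabla F_f|^2\leq 1$. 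Lévy's spherical concentration inequality then produces, for each fixed $f$, the pointwise bound $\mathbb{P}[|F_f-\mathbb{E}F_f|>t]\leq 2\exp(-t^2(m-1)/2)$; and the mean $\mathbb{E}F_f=\int f\,d\nu_m$ is $O(1/m)$-close to $\int f\,d\mathcal{N}(0,1)$ by the previous paragraph.

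The final step upgrades this pointwise concentration into a bound on $d_{\mathrm{BL}}(\mu,\mathcal{N}(0,1))=\sup_f|F_f-\int f\,d\mathcal{N}(0,1)|$, where the supremum runs over the unit bounded-Lipschitz ball. The strategy is a truncation-and-net argument: cut off each $f$ to an interval $[-K,K]$ — controlling the tails of $\mu$ by applying the same spherical concentration inequality to a single explicit $1$-Lipschitz cut-off function whose expectation $\nu_m(\{|x|>K\})\lesssim\sqrt m\,e^{-K^2/2}$ is known a priori — build a $\delta$-net in sup norm of the restriction to $[-K,K]$ of the unit BL-ball (of cardinality $\leq\exp(C(K/\delta)\log(1/\delta))$), and union-bound the concentration inequality over this finite net with deviation $t\sim\varepsilon$. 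A careful balancing of $K$, $\delta$ and $t$ together with a precise bookkeeping of the numerical constants appearing in Lévy's inequality and in the Stirling comparison is what produces the stated exponent $c(m-1)\varepsilon^5$ with $c=3^{-2}\,2^{-16}$ and the explicit prefactor $48\sqrt{\pi}/\varepsilon^{3/2}$.

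The main obstacle is precisely this last quantitative step: the spherical concentration is essentially sharp at scale $\varepsilon$, but the entropy of the $1$-Lipschitz unit ball — even truncated — is non-trivial, and the three parameters $(K,\delta,t)$ must be chosen so that neither the truncation error, nor the $\delta$-approximation error in integrals against $\mu$, nor the concentration deviation dominates. Achieving the explicit constants stated in the theorem, as opposed to the qualitative fact that $d_{\mathrm{BL}}(\mu,\mathcal{N}(0,1))\to 0$ in probability as $m\to\infty$, requires keeping every constant explicit throughout the optimisation, which is by far the most delicate part of the argument compared to the rather short conceptual steps 1-3 above.
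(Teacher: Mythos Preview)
Your outline is correct and follows the standard strategy for proving such results: parametrise by a uniform point on $S^{m-1}$, use vertex-transitivity to get $|c_i|^2=m$ (equivalently that your $w_i$ are unit vectors), show that each test functional $F_f$ is $1$-Lipschitz on the sphere, apply L\'evy concentration, and finish with a truncation-plus-net argument over the bounded-Lipschitz ball. One small correction: $\sum_i w_iw_i^T$ is an $m\times m$ matrix equal to $(n/m)I_m$, not $(n/m)P_E$; what you presumably meant is that $W=(n/m)P_E$ directly, which gives the same operator-norm bound.

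The paper, however, takes a much shorter route: after the same reduction $\sqrt{n}\,u_i=\langle\theta,c_i\rangle$ with $|c_i|^2=m$, it simply verifies the three hypotheses $\sigma=1$, $A=0$, $B=1$ of Meckes's quantitative projection-pursuit theorem \cite[Theorem~3]{meckes} and quotes that result as a black box. The explicit constants $c=3^{-2}2^{-16}$, the prefactor $48\sqrt{\pi}/\varepsilon^{3/2}$, and the exponent $(m-1)\varepsilon^5$ are exactly those appearing in Meckes's statement, so no optimisation is needed here. In effect you are reproving Meckes's theorem from scratch; this is self-contained but considerably longer, and reproducing the precise numerical constants would require tracking Meckes's argument closely anyway.
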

Notice that if $m$ is small, the statistics of the entries of random eigenvectors are not necessarily Gaussian: a good example is provided by cycle graphs where multiplicities are equal to $1$ or $2$, and statistics are not Gaussian. Results similar to Theorem \ref{t:Gaussianstat} exist for other models, see for instance \cite[Theorem 2.4]{shou}.


Our third result has already been proved in \cite[Theorems 1.1 and 1.8]{mageethomas} for Cayley graphs, which are particular vertex-transitive graphs, but again we provide a much shorter proof avoiding representation theory. This result says that typical eigenbases of vertex-transitive graphs are delocalized (in a ``quantum ergodic" sense) when most multiplicities are large.  
\begin{theorem}\label{t:smallscale}
Let $G$ be a vertex-transitive connected graph, with vertex set $V$. We denote by $m_k$, $1\leq k\leq K$, the multiplicities of the distinct non-trivial eigenvalues of the adjacency matrix of $G$. Let $M\in\N$ and let $f_1,\ldots,f_M\in L^2(V)$ be a collection of real-valued functions. 

Then, for any $t>0$, with probability at least
\begin{equation}\label{e:mult}
1-M\sum_{k=1}^K m_k\left(3e^{-\frac{t\sqrt{m_k}}{8}}+e^{-\frac{m_k}{12}}\right)
\end{equation}
with respect to the choice of an orthonormal basis $\mathcal{B}\in\mathcal{B}(G)$ according to $\nu$, the following property holds: for any $u\in\mathcal{B}$ and any $i=1,\ldots,M$, 
$$
\left|\sum_{x\in V}f_i(x)u(x)^2-\frac{1}{|V|}\sum_{x\in V}f_i(x)\right|\leq t\frac{\|f_i\|_{L^2}}{\sqrt{|V|}}.
$$
\end{theorem}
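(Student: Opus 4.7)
My plan is to reduce the theorem to a concentration-of-measure estimate on the unit sphere of each eigenspace, using vertex transitivity only to compute the mean and to produce a Frobenius-norm bound on the relevant quadratic form.

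Under $\nu$ the restriction of $\mathcal{B}$ to each eigenspace $E_k$ of dimension $m_k$ is Haar-distributed, so each of its vectors is marginally uniform on the unit sphere of $E_k$. A union bound over the $M\sum_k m_k$ pairs $(u,f_i)$ reduces the proof to showing, for a single uniformly random $u$ on the sphere of $E_k$ and a fixed real-valued $f$, that
$$\mathbb{P}\Bigl[\Bigl|\textstyle\sum_x f(x)u(x)^2 - |V|^{-1}\sum_x f(x)\Bigr| > t\|f\|_{L^2}/\sqrt{|V|}\Bigr] \leq 3e^{-t\sqrt{m_k}/8} + e^{-m_k/12}.$$

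I parametrize $u = g/\|g\|$ with $g = \sum_j g_j\phi_j$ for an arbitrary ONB $(\phi_j)$ of $E_k$ and $g_j$ iid $\mathcal{N}(0,1)$. Then $\sum_x f(x)u(x)^2 = g^T A g/\|g\|^2$ where $A_{jl} = \sum_x f(x)\phi_j(x)\phi_l(x)$ is the matrix of the compressed multiplier $\Pi_k D_f \Pi_k$. Vertex transitivity enters only via the fact that the diagonal of the $\mathrm{Aut}(G)$-equivariant orthogonal projector $\Pi_k$ is constant, $\Pi_k(x,x) = m_k/|V|$. This identifies the mean $\mathrm{tr}(A)/m_k = |V|^{-1}\sum_x f(x)$ and, using $\Pi_k^2 = \Pi_k$ so that $\sum_y \Pi_k(x,y)^2 = m_k/|V|$ together with a Schur/Cauchy--Schwarz bound on the symmetric kernel $\Pi_k(x,y)^2$, yields the Frobenius estimate
$$\|A\|_F^2 = \textstyle\sum_{x,y} f(x)f(y)\Pi_k(x,y)^2 \leq (m_k/|V|)\|f\|_{L^2}^2.$$
Setting $B := A - m_k^{-1}\mathrm{tr}(A)\,I$, the centered deviation equals $g^T B g/\|g\|^2$ with $\|B\|_F \leq \sqrt{m_k/|V|}\,\|f\|_{L^2}$. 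I control the denominator by the $\chi^2_{m_k}$ left tail $\mathbb{P}[\|g\|^2 < m_k/2] \leq e^{-m_k/12}$, which produces the second term. On the complementary event, a deviation at the target scale forces $|g^T B g| > t m_k \|f\|_{L^2}/(2\sqrt{|V|})$; a Hanson--Wright / moment-method estimate for centered Gaussian chaos together with the Frobenius bound above delivers the $3e^{-t\sqrt{m_k}/8}$ factor, and the union bound completes the proof.

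The only subtle point is obtaining a clean sub-exponential-in-$t\sqrt{m_k}$ rate uniformly in $t$: the sub-exponential regime of Hanson--Wright involves $\|B\|_{op}$, which can be as large as $2\|f\|_\infty$ and thus does not match $\|f\|_{L^2}/\sqrt{|V|}$ for peaky $f$. My plan to bypass this is to evaluate the MGF $\mathbb{E}[e^{\lambda g^T B g}]$, or equivalently the $p$-th moment $\|g^T B g\|_p$, at the optimal scale $p \asymp t\sqrt{m_k}$ and apply Markov; this produces a unified sub-exponential tail in $t\sqrt{m_k}$ whose coefficient is governed only by $\|B\|_F$. The Frobenius bound produced by vertex transitivity is the sole graph-theoretic input, and the rest is a standard Gaussian chaos computation combined with the spherical-versus-Gaussian transfer $u = g/\|g\|$.
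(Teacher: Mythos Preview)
Your approach is essentially identical to the paper's: the same quadratic-form reduction $\sum_x f(x)u(x)^2 = Z^\top B Z$, the same use of vertex transitivity for $\mathrm{tr}(B)$ and the Frobenius bound $\|B\|_F^2\le (m_k/|V|)\|f\|_{L^2}^2$, the same Gaussian parametrization $u=g/\|g\|$ with the $\chi^2$ left tail producing $e^{-m_k/12}$, and a Gaussian-chaos tail for the numerator (the paper diagonalizes and invokes \cite[Lemma~5.2(i)]{mageethomas}, which is exactly this computation). Your flagged ``subtle point'' is not an obstacle: since always $\|B\|_{op}\le\|B\|_F\le\sqrt{m_k/|V|}\,\|f\|_{L^2}$, the sub-exponential branch of Hanson--Wright (or the Magee--Thomas lemma with $A=\sqrt{C}$) already yields the $e^{-ct\sqrt{m_k}}$ rate directly, and your proposed moment argument amounts to the same bound.
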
 
The multiplicities appearing in the statements of Theorems \ref{t:Gaussianstat} and \ref{t:smallscale} are often large; for instance they are large for Cayley graphs built on quasi-random groups, as emphasized in the introductions of \cite{mageethomas} and \cite{naor}. Recall that a group $H$ is called  $D$-quasi-random if all its non-trivial unitary representations have dimension at least $D$; this condition implies that the Cayley graphs built on these groups have non-trivial eigenvalues multiplicities $\geq D$. 

Finite simple groups of Lie type with rank $\leq r$ are $|H|^s$-quasi-random for some $s>0$ depending only on $r$. Hence, in this case, $m_k\geq n^s$ for some $s>0$ and any $k$ for which the eigenvalue is non-trivial. As shown in \cite[Corollary 1.6]{mageethomas}, Theorem \ref{t:smallscale}, applied to any Cayley graph built with a symmetric set of generators of $H$, says that if $A_i\subset H$ is a collection of subsets partitioning $H$ with sizes satisfying $c|H|^{1-\eta}\leq |A_i|\leq C|H|^{1-\eta}$ where $0<\eta<s$, then with high $\nu$-probability on the choice of an orthonormal eigenbasis $\mathcal{B}$ of the adjacency operator, for every $i$ and every $u\in\mathcal{B}$, 
$$
\Bigl|\sum_{x\in A_i}u(x)^2-\frac{|A_i|}{|H|}\Bigr|\leq \frac{K\log |H|}{|H|^{\frac12(s-\eta)}}\frac{|A_i|}{|H|}.
$$
where $K>0$ only depends upon $c$. 

The conclusion of Theorem \ref{t:smallscale} is reminiscent of the ``probabilistic quantum unique ergodicity" statement of \cite[Corollary 1.3]{bauer}, which is a result about the simultaneous delocalization of all bulk eigenvectors of random regular graphs (without any averaging over eigenvectors). In  \cite[Corollary 1.3]{bauer} as in Theorem \ref{t:smallscale}, delocalization is only tested against a certain number of observables, and this number appears in the bound. In our Theorem \ref{t:smallscale}, since we assume high multiplicities through assumption \eqref{e:mult}, our result holds only for most eigenbases: delocalization cannot hold for all eigenvectors if multiplicities become too large.

The interested reader will find many other applications of Theorem \ref{t:smallscale} in \cite{mageethomas}.

\begin{remark}
Let $G=\{g_1,\ldots,g_n\}$ be a group with cardinality $n$, and $\alpha: G\rightarrow \R$, and consider the $n\times n$ matrix whose $(i,j)$ coefficient is $\alpha(g_jg_i^{-1})$. This is the adjacency matrix of a complete graph with edges weighted by $\alpha$. If $\alpha$ is the characteristic function of a symmetric set of generators of $G$, we recover adjacency matrices of usual Cayley graphs. Theorems \ref{t:trans} and \ref{t:smallscale} also work for matrices associated to general functions $\alpha:G\rightarrow \R$.
\end{remark}

\begin{remark}
Theorems \ref{t:trans} and \ref{t:smallscale} may be adapted to the setting of \emph{quantum} Cayley graphs, with all edges of same length, in fixed spectral windows (but not over the whole spectrum, which is infinite for quantum graphs), when the number of vertices tends to $+\infty$. One should first prove equidistribution of the quantities $\|u\|_{L^q(e)}$ where $e$ runs over the edges of the graph, and then invoke the fact that on each edge, eigenfunctions are well-spread due to their simple form. 
\end{remark}


\subsection{Delocalized approximate eigenvectors of general symmetric matrices}\label{s:delocmatrix}
It turns out that the technique used to prove Theorem \ref{t:trans} also provides information for delocalization properties of approximate eigenvectors of general symmetric matrices. But these results, instead of working for (exact) eigenvectors, only give information for \emph{most approximate eigenvectors}; in particular there is no way to deduce from these results any precise information on \emph{true eigenvectors}, except in sufficiently degenerate eigenspaces.

This section is devoted to giving a precise statement for this basic but very useful observation. Given a symmetric $n\times n$ matrix $H$ and a subset $I\subset \R$ (both $H$ and $I$ depend on $n$, but we omit dependence in $n$ in the notation of this section), we denote by $N(I)$ the number of eigenvalues of $H$ in $I$. We consider an orthonormal basis $(\psi_{\lambda_k})_{k\in[n]}$ of eigenvectors (in $\R^n$) of $H$ with associated eigenvalues $\lambda_k$. We set\footnote{The subscript $\lambda_k\in I$ in the sum means that we are summing over all $k\in[n]$ such that $\lambda_k\in I$.}
\begin{equation}\label{e:EI}
E_I=\Bigl\{u=\sum_{\lambda_k\in I} z_k\psi_{\lambda_k}, z_k\in\R\Bigr\},
\end{equation}
which is isometric via $u\mapsto (z_k)$ to $\R^{N(I)}$ endowed with the scalar product coming from $\R^n$. We denote by $\mathbb{S}_I$ the unit sphere of $E_I$, and by $\mathbb{P}_I$ the uniform probability on $\mathbb{S}_I$. We pick
\begin{equation}\label{e:randomu}
u\in \mathbb{S}_I \text{  according to  } \mathbb{P}_I.
\end{equation}
If $\sup(I)-\inf(I)$ is not too large, then elements of $E_I$ are approximate eigenvectors. Indeed, if $\lambda\in I$ and $u=\sum_{\lambda_k\in I}z_k\psi_{\lambda_k}\in E_I$ we have 
$$
\|(H-\lambda)u\|_{L^2}=\Bigl\|\sum_{\lambda_k\in I}(\lambda_k-\lambda)z_k\psi_{\lambda_k}\Bigr\|_{L^2}\leq (\sup(I)-\inf(I))\|u\|_{L^2}.
$$
As soon as $I$ is a small interval (for instance $\sup(I)-\inf(I)\ll \lambda$), $Hu=\lambda u+o_{L^2}(\lambda u)$ as $n\rightarrow +\infty$.

The next result is relevant when $N(I)\rightarrow +\infty$ as $n\rightarrow +\infty$. It shows that in this case, $u$ drawn according to \eqref{e:randomu} is delocalized  in the sense of $L^q$ norms ($q\in (2,+\infty]$) with high $\mathbb{P}_I$-probability.  The larger $N(I)$ is, the better the estimates are. 
\begin{theorem}\label{t:delocgeneralsituation} There exists $C>0$ universal (not depending on $I$) such that if $u\in\mathbb{S}_I$ is a random vector with law $\mathbb{P}_I$, then:
\begin{enumerate}[(i)]
\item For any $q\in[2,+\infty)$ and any $\Lambda\geq 1$
$$
\mathbb{P}_I\left(\|u\|_{L^q}\geq C\Lambda\sqrt{q} N(I)^{\frac1q-\frac12}\right) \leq  4\exp\left(-\frac18 C^2\Lambda^2 qN(I)^{\frac2q}\right).
$$
\item For any $\Lambda\geq 1$
$$
\mathbb{P}_I\left(\|u\|_{L^\infty}\geq C'\Lambda \left(\frac{\log(N(I))}{N(I)}\right)^{\frac12}\right) \leq  4N(I)^{-\frac18 C'^2\Lambda^2}
$$ 
where $C'=Ce$.
\end{enumerate}
\end{theorem}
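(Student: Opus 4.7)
The plan is to reduce both claims to concentration of measure on the unit sphere $S^{N(I)-1}$, combined with elementary pointwise estimates. Set $N := N(I)$. Writing $u = \sum_{\lambda_k \in I} z_k \psi_{\lambda_k} \in \mathbb{S}_I$ drawn according to $\mathbb{P}_I$, the vector $(z_k) \in \R^N$ is uniform on $S^{N-1}$. For any site $x$, one has $u(x) = \langle u, \Pi_I e_x\rangle_{\R^n}$ (where $e_x$ is the standard basis vector and $\Pi_I$ is the orthogonal projector onto $E_I$), and $\|\Pi_I e_x\|^2 = a_x := (\Pi_I)_{xx}$. Hence, when $a_x > 0$, the quantity $u(x)/\sqrt{a_x}$ is distributed as a single coordinate of the uniform law on $S^{N-1}$, and the standard sub-Gaussian tail on the sphere yields, for every $t \geq 0$,
$$
\mathbb{P}_I\bigl(|u(x)| \geq t\bigr) \leq 2 \exp\!\left(-\frac{(N-1)t^2}{2 a_x}\right).
$$

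For part (ii), I would combine this pointwise tail with $a_x \leq 1$ (a diagonal entry of an orthogonal projector lies in $[0,1]$) to get $\mathbb{P}_I(|u(x)|\geq t) \leq 2\exp(-(N-1)t^2/2)$, and then take a union bound over $x$. The naive union bound would however insert a factor $|\{x : a_x>0\}|$ that could be as large as $n$, while the claim only involves $N$. The key observation is that $\sum_x a_x = N$ and $a_x \in [0,1]$; since $a \mapsto \exp(-c/a)$ is convex on $(0,1]$ as soon as $c \geq 2$, maximizing $\sum_x \exp(-(N-1)t^2/(2a_x))$ under these constraints forces the mass to concentrate on at most $N$ indices where $a_x = 1$, giving
$$
\mathbb{P}_I\bigl(\|u\|_{L^\infty} \geq t\bigr) \leq 2 N \exp\!\left(-\frac{(N-1)t^2}{2}\right).
$$
Choosing $t = C'\Lambda \sqrt{\log N/N}$ with $C'$ a large enough universal constant absorbs the prefactor $2N$ into the exponential when $\Lambda \geq 1$, yielding (ii).

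For part (i), the plan is to apply L\'evy's concentration inequality to $F(u) := \|u\|_{L^q}$ on $\mathbb{S}_I$. Since $\|v\|_{L^q} \leq \|v\|_{L^2}$ for $q \geq 2$, the reverse triangle inequality shows that $F$ is $1$-Lipschitz with respect to $\|\cdot\|_{L^2}$, hence also on $\mathbb{S}_I$ equipped with the induced geodesic distance, and L\'evy's inequality produces
$$
\mathbb{P}_I\bigl(|F(u) - \mathrm{med}(F)| \geq s\bigr) \leq 2 \exp\!\left(-\frac{(N-1)s^2}{2}\right).
$$
It remains to upper bound the median. Using $\mathbb{E}[|u(x)|^q] = a_x^{q/2}\mathbb{E}[|Z|^q]$ with $Z$ one coordinate of the uniform law on $S^{N-1}$ and the standard estimate $\mathbb{E}[|Z|^q] \leq c_0^q(q/N)^{q/2}$, summing over $x$ via $a_x^{q/2} \leq a_x$ (valid since $a_x \in [0,1]$ and $q \geq 2$) and $\sum_x a_x = N$ yields $\mathbb{E}[\|u\|_{L^q}^q] \leq c_1^q q^{q/2} N^{1-q/2}$. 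Jensen (or Markov) then gives $\mathrm{med}(F) \leq c_2 \sqrt{q}\, N^{1/q-1/2}$. Combining with L\'evy's inequality at scale $s$ of order $\Lambda \sqrt{q}\, N^{1/q-1/2}$ produces the exponential bound in $\Lambda^2 q N^{2/q}$ claimed in (i).

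The main obstacle is the sharpened union bound of part (ii): without exploiting $\sum_x a_x = N$ together with the convexity of $a \mapsto \exp(-c/a)$ on $[0,1]$, the prefactor would be $n$ rather than $N(I)$, and the result would be useless whenever the projector $\Pi_I$ is very spread out in $\R^n$. All remaining ingredients reduce to standard concentration of measure on the sphere.
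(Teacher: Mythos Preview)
Your argument for part~(i) is essentially the paper's: bound the median of $\|u\|_{L^q}$ via $\mathbb{E}\|u\|_{L^q}^q$, using $\sum_x a_x^{q/2}\leq \sum_x a_x = N(I)$ (equivalently the interpolation bound $\|\widetilde\Pi_I\|_{L^{q/2}}\leq N(I)^{2/q}$), and then apply L\'evy's concentration to the $1$-Lipschitz functional $u\mapsto\|u\|_{L^q}$.

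For part~(ii) your approach is correct but genuinely different from the paper's, and more elaborate than needed. Your convexity observation --- that $a\mapsto e^{-c/a}$ is convex on $(0,1]$ once $c\geq 2$, whence $f(a)\leq a\,f(1)$ and $\sum_x e^{-c/a_x}\leq e^{-c}\sum_x a_x = N(I)\,e^{-c}$ --- is valid and does bring the union-bound prefactor down from $n$ to $N(I)$; one just needs the mild constraint $(N(I)-1)t^2\geq 4$, which the choice $t=C'\Lambda\sqrt{\log N(I)/N(I)}$ satisfies for $C'$ a large enough universal constant. The paper, however, bypasses this entirely: it applies~(i) with $q=\log N(I)$, so that $N(I)^{1/q}=e$ and $N(I)^{2/q}=e^2$, and uses $\|u\|_{L^\infty}\leq\|u\|_{L^q}$ (counting measure). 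This immediately gives the threshold $Ce\,\Lambda\sqrt{\log N(I)/N(I)}$ and the tail $4\exp\!\bigl(-\tfrac18 C^2e^2\Lambda^2\log N(I)\bigr)=4\,N(I)^{-\frac18 C'^2\Lambda^2}$ with $C'=Ce$. So what you flag as the ``main obstacle'' is self-imposed: once~(i) holds with $q$ free, (ii) is a one-line specialization and no sitewise union bound is required. Your route does have the small merit of showing that the $L^\infty$ bound can be obtained directly from the diagonal of the projector without passing through high $L^q$ norms, but it costs an extra argument that the paper does not need.
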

Theorem \ref{t:delocgeneralsituation} follows from elementary concentration of measure estimates\footnote{Theorem \ref{t:delocgeneralsituation} does not actually rely on the fact that $u$ is a linear combination of eigenvectors, but only on the fact that $u$ is a random linear combination of elements of an orthonormal basis $(\psi_{\lambda_k})_{k\in[n]}$ of $\R^n$. However, as explained above, in the framework of random linear combinations of eigenvectors, $u$ is automatically an approximate eigenvector - which is a nice property.}. The idea of using linear combinations to obtain delocalized approximate eigenvectors is not new (see Section \ref{s:biblio} for references), nevertheless, to the best of our knowledge, it has never been stated in the general and sharp form of Theorem \ref{t:delocgeneralsituation}. This result provides motivation for Section \ref{s:strongerdelocappro}, which strengthens the above bounds under some assumptions on $H$.

\begin{remark}
Results similar to Theorem \ref{t:delocgeneralsituation} also hold for linear combinations of eigenvectors of \emph{normal matrices}. In this case $I$ is a region of $\mathbb{C}$, eigenvectors are complex-valued, and $\mathbb{S}_I$ is the unit sphere of $\mathbb{C}^{N(I)}$. 
\end{remark}

\subsection{Stronger delocalization of approximate eigenvectors under two assumptions} \label{s:strongerdelocappro}
For fixed $q\in[2,+\infty)$, to obtain the optimal delocalization
\begin{equation}\label{e:optimal}
\|u\|_{L^q}\lesssim Cn^{\frac1q -\frac12}
\end{equation}
 with high probability on $u\sim \mathbb{P}_I$, Theorem \ref{t:delocgeneralsituation} requires $N(I)\gtrsim n$.

In this section, we push further the ideas of Section \ref{s:delocmatrix} and state two assumptions on families of large (deterministic) graphs for which it is possible to take smaller $I$, with $N(I)\lesssim n\frac{\log \log n}{\log n}$, while keeping the optimal delocalization \eqref{e:optimal} of approximate eigenvectors  with high probability.

These two assumptions have already been considered several times in the literature: the first assumption concerns the number of small cycles in the graphs, which is assumed to be small; under this assumption, the graphs converge locally weakly (i.e., in the sense of Benjamini-Schramm) toward a probability measure on rooted trees. The second assumption is a bound on the expectation of the Green function of the limiting rooted trees under this probability measure. 

These assumptions are close to those of the paper \cite{anansabri} where, under an additional assumption of expansion of the graphs which we do not need here, Anantharaman and Sabri prove a quantum ergodicity result. Their result is not strong enough to give information about $L^q$ norms of eigenvectors; our result gives information about $L^q$ norms, but only for most approximate eigenvectors, which again is much easier to obtain than for exact eigenvectors.

\subsubsection{Assumption of few short loops} \label{s:feshortloops} We consider a sequence of graphs $(G_n)_{n\in\N}$ with vertex set $V_n$, and degree bounded by $D$. Our first assumption says that the graphs have few short loops:

\BST For all $r>0$, $$\lim_{n\rightarrow \infty} \frac{|\{x\in V_n:\rho_{G_n}(x)<r\}|}{|V_n|}=0$$ where $\rho_{G_n}(x)$ is the injectivity radius of $G_n$ at $x$, i.e., the maximal radius $\rho$ for which the ball $B_{G_n}(x,\rho)$ is a tree.

Up to passing to a subsequence (which we omit in the notation), assumption \BST is equivalent to

\BSCT\ The sequence $G_n$ has a ``local weak limit" $\overline{\mathbb{P}}$ supported on the set of (isomorphism classes of) rooted trees.

We refer to Appendix \ref{a:localweak} for reminders on local weak limits, which are also called Benjamini-Schramm limits. Here we simply recall that \BSCT\ means that for any $h\in\N$ and any rooted graph $(H;o)$ with depth $h$, there holds
$$
\lim_{n\rightarrow \infty} \frac{|\{x\in V_n :(G_n;x)_h\simeq (H;o)\}|}{|V_n|}=\overline{\mathbb{P}}(\{(G,x) : (G;x)_h \simeq (H;o)\})
$$ 
where $(G_n;x)_h$ denotes the graph obtained by cutting $G_n$ at distance $h$ from $x$, and $\simeq$ is the symbol of graph isomorphy.

Let us reformulate \BSCT. We introduce for any $n\in\mathbb{N}$ and $h\in \mathbb{N}$ the probability measure on the finite set of (isomorphism classes of) rooted graphs with depth $\leq h$ given by\footnote{There is a slight abuse of notation here. The Dirac masses should actually be put on the isomorphism classes of $(G_n;x)_h$.}
\begin{equation}\label{e:PGnh}
\overline{\mathbb{P}}^{(h)}_{G_n}=\frac{1}{|V_n|} \sum_{x\in V_n} \delta_{(G_n;x)_h}.
\end{equation}
\
 Similarly for any  rooted graph $(H;o)$ with depth $\leq h$, we set
$$
\overline{\mathbb{P}}^{(h)}(H;o)=\overline{\mathbb{P}}(\{(G;x): (G;x)_h \simeq (H;o)\})
$$
(note that $\overline{\mathbb{P}}^{(h)}$ is supported on rooted \textit{trees}).
Then \BSCT\ is equivalent to 
\begin{equation}\label{e:dtvconv}
\forall h\in \N, \qquad d_{\TV}(\overline{\mathbb{P}}^{(h)}_{G_n},\overline{\mathbb{P}}^{(h)})\underset{n\rightarrow +\infty}{\longrightarrow} 0
\end{equation}
where $d_{\TV}$ denotes the total variation distance. 

 In Theorem \ref{t:Linftygreen}, concerning $L^q$ norms for some fixed $q\in [2,+\infty)$, we replace \eqref{e:dtvconv} by the quantitative assumption that there exist $L>0$ and $h=h(n)\in\N$ such that for any $n\in\N$, 
\begin{equation}\label{e:peudecycles2}
d_{\TV}(\overline{\mathbb{P}}_{G_n}^{(h)},\overline{\mathbb{P}}^{(h)})\leq Lh^{-\frac{q}{2}}.
\end{equation}
The larger we can choose $h$, the stronger our conclusion will be. We explain in Section \ref{s:Nlifts} that \eqref{e:peudecycles2} is satisfied with high probability for random lifts of a fixed base graph, with $h=c\log n$ for some $c>0$ (and $L$ depending on $q$).

\subsubsection{Assumption on the Green functions}
Our second condition concerns Green functions of the limiting rooted trees, in the spirit of the assumption also called \Green\ in \cite{anansabri} (the assumption in \cite{anansabri} is stronger, in the sense that if it holds then our assumption holds, see Remark \ref{r:comparwithAS}). Given a rooted graph $(T;o)$, we denote by $R^T_{oo}$ its  Green function evaluated at the root:
$$
R^T_{oo}(z)=\langle \delta_o, (A(T)-z\Id )^{-1}\delta_o\rangle
$$
where $A(T)$ is the adjacency matrix of $T$.
Let $I_1\subset \R$ and $q\in [2,+\infty)$. We assume:

\Green\ There holds 
\begin{equation}\label{e:stronggreen}
\sup_{\lambda\in I_1, \eta\in(0,1)} \mathbb{E}_{(T;o)\sim \overline{\mathbb{P}}}\left(\left(\Im R^T_{oo}(\lambda+i\eta)\right)^{\frac{q}{2}}+|R^T_{oo}(\lambda+i\eta)|^2\right)<+\infty.
\end{equation}

As explained in Section \ref{s:delocunderGreen} and Appendix \ref{a:localweak}, the assumption \BSCT\ implies that the spectral measures $\mu^{G_n}$ converge as $n\rightarrow+\infty$ toward a measure $\overline{\mu}$, while \Green\ implies that $\overline{\mu}$ is absolutely continuous in $I_1$ (but we actually need the full strength of \eqref{e:stronggreen}, and not only this consequence).

We show in Section \ref{s:Nlifts} that \Green\ is satisfied when $\overline{\mathbb{P}}$ is obtained as the local weak limit of random lifts of a base graph. In this case, $\overline{\mathbb{P}}$ is supported on trees of finite cone type, see Section \ref{s:Nlifts} for a definition.

\begin{remark}\label{r:comparwithAS}
Our condition \textbf{(Green)} is weaker than the condition \textbf{(Green)} considered in \cite{anansabri}. Indeed, it is mentioned in \cite[Remark A.4]{anansabri} that the condition \textbf{(Green)} in \cite{anansabri} implies that for any $s>0$, 
$$
\sup_{\lambda\in I_1,\eta\in (0,1)} \mathbb{E}_{(T,o)\sim \overline{\mathbb{P}}}(|R_{oo}^T(\lambda+i\eta)|^s)<+\infty.
$$
In particular, for $s=2$ and $s=q/2$, this implies our condition \textbf{(Green)}.
\end{remark}

\subsubsection{Statement of the result}
In the setting introduced in Section \ref{s:feshortloops}, our main result reads as follows.
\begin{theorem}\label{t:Linftygreen} Let $q\in[2,+\infty)$ and $(G_n)_{n\in \N}$ be a sequence of graphs with local weak limit $\overline{\mathbb{P}}$ supported on the set of rooted trees. Assume that there exist $L,h_0>0$ such that for any $n\in\N$, \eqref{e:peudecycles2} holds for some $h=h(n)\geq h_0$.  Let $I_1$ be a bounded open set where \Green\ is satisfied for this $q$, and let $c_0>0$ such that $\overline{\mu}$ has density $\geq c_0>0$ in $I_1$. 
Then there exist $C,C'>0$ (depending on $L,h_0,c_0$) such that for any $\Lambda>0$,  any $n\in\N$ and any interval $I\subset I_1$  of length at least $C(\log h)/h$ there holds 
$$
\mathbb{P}_I\left(\|u\|_{L^q}\geq \Lambda C'n^{\frac1q-\frac12}\right) \leq \Lambda^{-q}
$$ 
where $u\sim \mathbb{P}_I$, i.e., $u$ is a random approximate eigenvector of the adjacency matrix of $G_n$.
\end{theorem}
In other words, most approximate eigenvectors spectrally localized in $I$ are optimally delocalized in $L^q$ norm. Note that compared to \cite{anansabri}, we do not need the condition that the graph is an expander. In some applications, for instance for random lifts of a fixed base graph, $h(n)$ may be taken as large as $c\log n$ for some fixed $c>0$, and $N(I)$ in this case is of order $n\frac{\log\log n}{\log n}=o(n)$. Therefore Theorem \ref{t:Linftygreen} improves over Theorem \ref{t:delocgeneralsituation} when \Green\ and \eqref{e:peudecycles2} hold.

In Section \ref{s:Nlifts} we show that Theorem \ref{t:Linftygreen} applies to random lifts of a fixed base graph. In this setting we even obtain optimal $L^\infty$ bounds on approximate eigenvectors (see Theorem \ref{t:randomliftinfty}).

Matrices of size $n\times n$ with $d\gg \log n$  standard Gaussian entries per row and column (other entries being set to $0$) are another example where optimal $L^q$ delocalization can be proved for most approximate eigenvectors, obtained as linear combinations of eigenvectors corresponding to the top $N(I)=o(n)$ eigenvalues, see \cite[Section 4]{vanhandel}.


\subsection{Proof techniques and a measure of delocalization}\label{s:technique}
We use the same methodology for proving Theorems \ref{t:trans}, \ref{t:delocgeneralsituation} and \ref{t:Linftygreen}. Consider $H$ a symmetric $n\times n$ matrix, which is the adjacency matrix of a graph for Theorems \ref{t:trans} and \ref{t:Linftygreen}. We fix a subset $I\subset \R$ and consider $E_I$ given by \eqref{e:EI}. We denote by $\Pi_I$ the orthogonal projector onto $E_I$, which has a kernel $\Pi_I(\cdot,\cdot)$ given by
\begin{equation}\label{e:specproj}
\Pi_I(i,j)=\sum_{\lambda_k\in I} \psi_{\lambda_k}(i)\psi_{\lambda_k}(j)
\end{equation}
where $(\psi_{\lambda_k})_{1\leq k\leq n}$ is any orthonormal basis of $\R^n$ composed of eigenvectors $\psi_{\lambda_k}$ of $H$ with associated eigenvalues $\lambda_k$, and $i,j$ denote the coordinates in the canonical basis of $\R^n$. It is important to notice that $\Pi_I(i,j)$ does not depend on the choice of the orthonormal basis.
Our proofs are based on a detailed study of the quantity
\begin{equation}\label{e:PiIondiag}
\sum_{i\in[n]}\Pi_I(i,i)^{q/2}=\Bigl\|\sum_{\lambda_k\in I}\psi_{\lambda_k}^2\Bigr\|_{L^{q/2}}^{q/2}
\end{equation}
for $q\in (2,+\infty]$. We use multiple times the fact, already used for instance in \cite[Proposition 3.1]{vanhandel}, that if good upper bounds on \eqref{e:PiIondiag} are known, then most linear combinations of the $\psi_{\lambda_k}$, $\lambda_k\in I$, are delocalized in the $L^q$ sense. Several versions of this fact are proved in Sections \ref{s:vertex} and \ref{s:results}. 

If $H$ is the adjacency matrix of a vertex-transitive graph, then \eqref{e:PiIondiag} is explicit and small, even when $I$ is reduced to a singleton. In this case, linear combinations of the $\psi_{\lambda_k}$, $\lambda_k\in I$, are also true eigenvectors and we are able to prove that most eigenvectors are delocalized, see Theorem \ref{t:trans}.  If we are dealing with a general symmetric matrix $H$, then \eqref{e:PiIondiag} is small as soon as $I$ contains sufficiently many (not necessarily distinct) eigenvalues, which implies Theorem \ref{t:delocgeneralsituation}. Finally, when $H$ is the adjacency matrix of a graph $G$ with few short loops, we estimate \eqref{e:PiIondiag} by comparing the resolvent of $H$ with that of trees arising in the universal cover of $G$.

The quantity \eqref{e:PiIondiag} is an interesting measure of delocalization. Compared to the averaged participation ratio (considered for instance in Section 5 of \cite{bordenave})
$$
APR_q(I)=\frac{1}{N(I)}\sum_{\lambda_k\in I} \sum_{i\in[n]} |\psi_{\lambda_k}(i)|^q
$$
the quantity \eqref{e:PiIondiag} does not depend on the choice of the eigenbasis $(\psi_{\lambda_k})_{k\in[n]}$. Notice that \eqref{e:PiIondiag} controls the averaged participation ratio:
$$
APR_q(I)= \frac{1}{N(I)}\sum_{i\in [n]}\sum_{\lambda_k\in I} |\psi_{\lambda_k}(i)|^q\leq  \frac{1}{N(I)}\sum_{i\in [n]}\Bigl(\sum_{\lambda_k\in I} |\psi_{\lambda_k}(i)|^2\Bigr)^{q/2}=\frac{1}{N(I)}\Bigl\|\sum_{\lambda_k\in I}\psi_{\lambda_k}^2\Bigr\|_{L^{q/2}}^{q/2}.
$$


\subsection{Related results}\label{s:biblio}
In this section we provide a very brief (and thus necessarily very incomplete) overview of the literature on delocalization of eigenvectors of graphs and matrices, mostly focusing on papers related to ours.

\subsubsection{Erdös-Rényi and regular graphs.} Very strong delocalization results in terms of $L^\infty$ norms have been proved for the eigenvectors of the adjacency matrix of Erdös–Rényi graphs and for random regular graphs, see for instance \cite{erdosrenyi}, \cite{bauer}, \cite{bauer2}. In a different direction, \cite{brookslindenstrauss} proves that in a regular graph with few short cycles, any subset of vertices supporting $\varepsilon$ of the $L^2$ mass of an eigenvector must be large, and \cite{ananlemasson} proves a quantum ergodicity result for expander regular graphs of fixed degree with few short cycles.

\subsubsection{Wigner and Lévy matrices.} Eigenvectors of Wigner matrices have been studied extensively: we only mention \cite{erdosschleinyau}  for sharp bounds on the $L^\infty$ norms of eigenvectors, and \cite{bourgadeyau}, which proves asymptotic normality of eigenvectors for generalized Wigner matrices and a probabilistic version of quantum unique ergodicity.

Very interestingly for us, the papers \cite{bordenaveguionnet}, \cite{bordenaveguionnet2} and \cite{aggarwal} (see Appendix 17), devoted to the study of eigenvectors of Lévy matrices, provide many insights about the role of the spectral projector in the study of localization/delocalization. 

\subsubsection{Cayley graphs.} The papers \cite{zhao}, \cite{mageethomas}, \cite{naor} are concerned with delocalization properties of eigenvectors of Cayley graphs, which are particular vertex-transitive (hence, highly symmetric) graphs constructed via generators of groups. We provide a detailed account on these three papers at the beginning of Section \ref{s:symmetricgraphs}, and revisit some of their results with a different proof technique well-suited for generalizations. Let us also mention that \textit{eigenvalues} of Cayley graphs are also an active research subject, see \cite{eigenvalues} for a recent survey.

\subsubsection{``Non-homogeneous" graphs and matrices.} Many fewer papers are concerned with properties of eigenvectors on ``non-homogeneous" graphs and matrices. In the paper \cite{anansabri} (see also \cite{anansabrianderson} and \cite{anansabrisurvey}), it is proved that for a sequence of finite graphs endowed with discrete Schrödinger operators, assumed to have few short loops and to be an expander, absolutely continuous spectrum for the weak limit of the sequence (under the form of a control of the Green function) implies quantum ergodicity: spectral delocalization implies spatial delocalization. Under the same assumptions, we prove in Section \ref{s:delocunderGreen} a strong estimate on the $L^q$-norms (which \cite{anansabri} does not give), but only for \textit{most approximate eigenvectors}, and not for \textit{exact eigenvectors} as in \cite{anansabri}. The largest part of the literature in the field is devoted to the study of \textit{exact eigenvectors}, for which the tools of the present paper are too rough (except for the vertex-transitive case and its generalizations).

The paper \cite{lemassonsabri} provides $L^q$-bounds for eigenvectors of Schrödinger operators on large, possibly irregular, finite graphs. These bounds are somehow orthogonal to ours: they are far from being sharp but again, they work for \textit{exact eigenvectors}, whereas our bounds are much sharper but work only for \textit{most approximate eigenvectors}.

Let us also mention the recent paper \cite{vanhandel}, where the authors consider $n\times n$ self-adjoint Gaussian random matrices with $d$ nonzero entries per row. When $d\gg \log n$, they construct a delocalized \textit{approximate top eigenvector} by taking a random superposition of many exact eigenvectors near the edge of the spectrum, and they highlight the fact that delocalization properties of approximate eigenvectors are more universal than those of exact eigenvectors. Section \ref{s:delocunderGreen} in the present paper develops this idea in another direction, for graphs with few short loops with a control on the resolvent.

We finally mention \cite{bordenave2} concerning delocalization of eigenvectors in percolation graphs and the survey \cite{bordenave} from which we took inspiration for our Section \ref{s:delocunderGreen}.

\subsubsection{Compact Riemannian manifolds.} Our paper borrows several techniques from papers concerned with delocalization of eigenfunctions of the Laplacian on compact Riemannian manifolds. Zelditch was the first to notice in \cite{zelditch1} that although there exist localized bases of eigenfunctions of the Laplacian on the sphere, almost any\footnote{for natural probability measures on the space of orthonormal eigenbases} eigenbasis on the sphere is quantum ergodic, i.e., high frequency eigenfunctions are totally delocalized. In the same spirit, the first author and Lebeau proved in \cite{burqlebeau} that almost every Hilbert base of $L^2(\mathbb{S}^d)$ made of $L^2(\mathbb{S}^d)$-normalized spherical harmonics has all its elements uniformly bounded in any $L^q(\mathbb{S}^d)$ space ($q<+\infty$). In \cite{burqlebeau}, this result has been extended to arbitrary manifolds, to the price of considering only approximate eigenfunctions (see also \cite{zelditch2}, \cite{burqlebeauproc}, and \cite{han}, \cite{hantacy}, \cite{ireland}, \cite{ireland2} for more recent developments).

\subsection{Organization of the paper} In Section \ref{s:preliminary} we prove basic results regarding random functions on $\mathbb{S}_I$ picked according to the probability $\mathbb{P}_I$. Building upon this, we prove in Section \ref{s:symmetricgraphs} Theorems \ref{t:trans} and \ref{t:smallscale} about vertex-transitive graphs, and extend Theorem \ref{t:trans}   in Section \ref{s:product} to the case of products of graphs, one of which is vertex-transitive. The proofs are not based on representation theory and are all particularly elementary. In Section \ref{s:results} we use the same arguments, sketched in Section \ref{s:technique}, to prove Theorem \ref{t:delocgeneralsituation}. Finally, Section \ref{s:delocunderGreen} is devoted to the proof of Theorem \ref{t:Linftygreen}. In the appendix, we have gathered several useful results concerning concentration of measure, bounds on spectral measures, and Benjamini-Schramm convergence.

\subsection{Acknowledgments} We thank Charles Bordenave, Mostafa Sabri, Laura Shou, Ramon van Handel and Yufei Zhao for answering our questions related to this project. We are also grateful to two anonymous referees for their careful reading and their questions. This research was supported by the European research Council (ERC) under the European Union’s Horizon 2020 research and innovation programme (Grant agreement 101097172 - GEOEDP).


\section{Preliminary computations} \label{s:preliminary}

 This section is devoted to proving basic results regarding random functions on $\mathbb{S}_I$ picked according to the probability $\mathbb{P}_I$ introduced in Section \ref{s:delocmatrix}. We keep the framework of the introduction: we fix a symmetric $n\times n$ real-valued matrix $H$ and a subset $I\subset \R$, and we denote by $N(I)$ the number of eigenvalues of $H$ in $I$. We recall that the spectral projector $\Pi_I(\cdot,\cdot)$ has been introduced in \eqref{e:specproj}. It does not depend on the choice of an eigenbasis, but in the sequel it will nevertheless be convenient to fix an orthonormal basis of eigenvectors $(\psi_{\lambda_k})_{k\in[n]}$ of $H$.

Finally, we set $\dproj(x)=\Pi_I(x,x)$.
\begin{lemma}\label{l:u(x)grand}
Assume $N(I)\geq 2$. Let $u$ be the random function given by \eqref{e:randomu}. Then for any $t\geq 0$ and $x\in[n]$,
\begin{equation}\label{e:probaplusgrand}
\mathbb{P}_I(|u(x)|>t) =\mathbf{1}_{0\leq t <\dproj(x)^{1/2}}
 	2\frac{\Gamma\left(\frac{N(I)}{2}\right)}{\Gamma\left(\frac{N(I)-1}{2}\right)\Gamma\left(\frac12\right)}\int_0^{\theta_t} (\sin \varphi)^{N(I)-2} d\varphi 
\end{equation}
where $\theta_t\in[0,\pi/2]$ is the unique solution to $\cos \theta_t=t\dproj(x)^{-1/2}$, and $\Gamma$ denotes the Euler Gamma function.
\end{lemma}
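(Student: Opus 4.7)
The plan is to realize $u(x)$ as the inner product of the random coefficient vector with a fixed vector, reduce by rotational invariance to a one-dimensional marginal of the uniform distribution on a sphere, and then compute that marginal in spherical coordinates.

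More precisely, write $u = \sum_{\lambda_k \in I} z_k \psi_{\lambda_k}$ and let $v_x = (\psi_{\lambda_k}(x))_{\lambda_k \in I} \in \R^{N(I)}$, so that
\[
u(x) = \sum_{\lambda_k \in I} z_k \psi_{\lambda_k}(x) = \langle z, v_x\rangle.
\]
By definition of $\dproj(x)$ we have $\|v_x\|^2 = \sum_{\lambda_k\in I}\psi_{\lambda_k}(x)^2 = \dproj(x)$. Via the isometry $u\mapsto (z_k)$, the law $\mathbb{P}_I$ transports to the uniform law on the unit sphere $S^{N(I)-1}\subset \R^{N(I)}$. Rotational invariance of this law then implies that $u(x)$ has the same distribution as $\dproj(x)^{1/2}\, \langle z, e_1\rangle = \dproj(x)^{1/2} z_1$, where $z$ is uniform on $S^{N(I)-1}$ (this requires $v_x\neq 0$; if $v_x=0$ both sides of \eqref{e:probaplusgrand} vanish trivially).

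Consequently, for $t\geq \dproj(x)^{1/2}$ the event $\{|u(x)|>t\}$ is empty since $|z_1|\leq 1$, which explains the indicator $\mathbf{1}_{0\leq t<\dproj(x)^{1/2}}$. For $0\leq t<\dproj(x)^{1/2}$, set $s=t\,\dproj(x)^{-1/2}\in[0,1)$, so that $\mathbb{P}_I(|u(x)|>t)=\mathbb{P}(|z_1|>s)$. I would parametrize points of $S^{N(I)-1}$ by the angle $\varphi\in[0,\pi]$ between them and $e_1$ together with a direction in $S^{N(I)-2}$, for which the normalized surface measure decomposes as $C_N (\sin \varphi)^{N(I)-2}\, d\varphi$ times the uniform measure on $S^{N(I)-2}$, with $C_N$ the appropriate normalizing constant. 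Writing $s=\cos\theta_t$ with $\theta_t\in[0,\pi/2]$, the condition $|z_1|=|\cos\varphi|>\cos\theta_t$ corresponds to $\varphi\in[0,\theta_t)\cup(\pi-\theta_t,\pi]$, so by the symmetry $\sin\varphi=\sin(\pi-\varphi)$,
\[
\mathbb{P}(|z_1|>\cos\theta_t) = \frac{2\int_0^{\theta_t}(\sin\varphi)^{N(I)-2}d\varphi}{\int_0^\pi (\sin\varphi)^{N(I)-2}d\varphi}.
\]

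The last step is to evaluate the denominator via the Beta integral (Wallis formula):
\[
\int_0^{\pi}(\sin\varphi)^{N(I)-2}d\varphi = 2\int_0^{\pi/2}(\sin\varphi)^{N(I)-2}d\varphi = \frac{\Gamma\!\left(\frac{N(I)-1}{2}\right)\Gamma\!\left(\frac12\right)}{\Gamma\!\left(\frac{N(I)}{2}\right)}.
\]
Substituting this into the previous display produces exactly the right-hand side of \eqref{e:probaplusgrand}. The assumption $N(I)\geq 2$ is used only to ensure that the spherical-coordinate decomposition and the Beta integral make sense (for $N(I)=2$, one gets the arcsine law, which is still covered by the formula). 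There is no real difficulty here; the only thing one has to be a little careful with is the reduction to a one-dimensional marginal via rotational invariance, and keeping track of the normalizing constant.
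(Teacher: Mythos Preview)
Your proof is correct and follows essentially the same approach as the paper: express $u(x)$ as $\dproj(x)^{1/2}$ times the inner product of the uniform random vector $z\in\mathbb{S}_I$ with a fixed unit vector, then compute the resulting one-dimensional marginal via spherical coordinates. The only difference is cosmetic: the paper cites its Appendix Proposition for the formula $\mu_{d-1}(|x_1|>\cos\theta)=C_d\int_0^\theta(\sin\varphi)^{d-2}d\varphi$, whereas you derive it (and the constant $C_d$) directly.
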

\begin{proof}
We set
$$
v_I(x)=\frac{1}{\dproj(x)^{1/2}}(\psi_{\lambda_k}(x))_{\lambda_k\in I}
$$
which is an element of $\mathbb{S}_I$. There holds
$$
u(x)=\sum_{\lambda_k\in I}z_{\lambda_k} \psi_{\lambda_k}(x)=\dproj(x)^{1/2} z\cdot v_I(x)
$$
where $z$ is a random vector whose law is uniform over $\mathbb{S}_I$. In particular $|u(x)|\leq \dproj(x)^{1/2}$, which establishes \eqref{e:probaplusgrand} for $t\geq \dproj(x)^{1/2}$.
If  $0\leq t< \dproj(x)^{1/2}$, using Proposition \ref{e:concmeasphi} we get
$$
\mathbb{P}_I(|u(x)|>t)=\mathbb{P}_I(|z\cdot v_I(x)|>t\dproj(x)^{-1/2})=2\frac{\Gamma\left(\frac{N(I)}{2}\right)}{\Gamma\left(\frac{N(I)-1}{2}\right)\Gamma\left(\frac12\right)}\int_0^{\theta_t} (\sin\varphi)^{N(I)-2} d\varphi
$$
where $\theta_t\in[0,\pi/2]$ is the unique solution to $\cos \theta_t=t\dproj(x)^{-1/2}$. 
\end{proof}

\begin{proposition}
There exists $C>0$ such that for any $2\leq p\leq q<+\infty$ and any $I\subset \R$,
\begin{equation}\label{e:exptoq}
\mathbb{E}\left(\|u\|_{L^p}^q\right)^{1/q}\leq C\sqrt{q} \left(\frac{\|\dproj\|_{L^{p/2}}}{N(I)}\right)^{1/2}
\end{equation}
Moreover, for any $K>0$ there exists $C_K>0$ such that for any $2\leq p\leq q<+\infty$ and any $I$ with $N(I)\leq K$, there holds
\begin{equation}\label{e:bddmult}
\mathbb{E}\left(\|u\|_{L^p}^q\right)^{1/q}\leq C_K \|\dproj\|_{L^{p/2}}^{1/2}.
\end{equation}
\end{proposition}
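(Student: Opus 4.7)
The plan is to exploit the decomposition already established in Lemma \ref{l:u(x)grand}. Writing $u(x)=\dproj(x)^{1/2}\, z\cdot v_I(x)$ with $z$ uniform on the unit sphere $\mathbb{S}_I\cong \mathbb{S}^{N(I)-1}$ and $v_I(x)$ a deterministic unit vector in $E_I$, the pointwise distribution of $u(x)$ is completely understood: $|u(x)|$ has the same law as $\dproj(x)^{1/2}$ times $|z_1|$, where $z_1$ is a single coordinate of a uniform random point on $\mathbb{S}^{N(I)-1}$. The strategy is therefore in two steps: first, establish a sharp moment bound for one coordinate on the sphere; second, pass from pointwise moments to moments of $\|u\|_{L^p}$ via Minkowski's inequality.

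First I would prove the single-coordinate bound: for $q\geq 2$ and $N=N(I)\geq 2$,
\[
\mathbb{E}\bigl(|u(x)|^q\bigr)^{1/q}\leq C\sqrt{q/N}\ \dproj(x)^{1/2},
\]
with $C$ universal. This follows either from the explicit density $\propto (1-t^2)^{(N-3)/2}$ on $[-1,1]$ of $z_1$ (compatible with the formula \eqref{e:probaplusgrand} of Lemma \ref{l:u(x)grand}) together with the standard Gamma-function identity, or, more cleanly, from Proposition \ref{e:concmeasphi} combined with the subgaussian concentration of a coordinate on the sphere around $0$ at scale $1/\sqrt{N}$, which yields $\mathbb{E}(|z_1|^q)^{1/q}\leq C\sqrt{q/N}$ for all $q\geq 2$.

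Next I would apply Minkowski's inequality in $L^{q/p}(\mathbb{P}_I)$. Since $q\geq p$, the exponent $q/p$ is at least $1$, so with $f_x=|u(x)|^p\geq 0$
\[
\mathbb{E}\bigl(\|u\|_{L^p}^q\bigr)^{p/q}=\Bigl\|\sum_{x\in[n]}f_x\Bigr\|_{L^{q/p}(\mathbb{P}_I)}\leq \sum_{x\in[n]} \mathbb{E}\bigl(|u(x)|^q\bigr)^{p/q}\leq \bigl(C\sqrt{q/N}\bigr)^p \sum_{x\in[n]}\dproj(x)^{p/2}.
\]
The last sum is exactly $\|\dproj\|_{L^{p/2}}^{p/2}$, so taking the $1/p$-th power gives $\mathbb{E}(\|u\|_{L^p}^q)^{1/q}\leq C\sqrt{q}\,(\|\dproj\|_{L^{p/2}}/N(I))^{1/2}$, which is \eqref{e:exptoq}.

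For \eqref{e:bddmult}, when $N(I)\leq K$ the concentration gain $1/\sqrt{N}$ is no longer useful. Instead I would use the a priori bound $|u(x)|\leq \dproj(x)^{1/2}$ (already observed in the proof of Lemma \ref{l:u(x)grand}), which trivially gives $\mathbb{E}(|u(x)|^q)^{1/q}\leq \dproj(x)^{1/2}$; substituting this in the Minkowski step above yields the claim with a constant $C_K$ absorbing the $\sqrt{q/N(I)}$ factor (bounded by $\sqrt{q}\leq \sqrt{q_0(K)}$ for the relevant regime, or simply taken equal to $1$ in the single-coordinate bound and inherited through Minkowski). The only real subtlety I anticipate is getting the sharp $\sqrt{q/N}$ scaling in the moment estimate uniformly in $q$; for $q$ of order $N$ one must be a little careful with the Gamma quotient $\Gamma(N/2)/\Gamma((N-1)/2)\sim \sqrt{N/2}$ arising from the sphere density, but this is a standard computation and not a genuine obstacle.
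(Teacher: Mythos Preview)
Your proposal is correct and follows essentially the same route as the paper: compute the pointwise $q$-th moment of $u(x)$ via the sphere-coordinate law (the paper derives the exact Beta/Gamma identity $\mathbb{E}|u(x)|^q=\frac{\Gamma((q+1)/2)\Gamma(N/2)}{\Gamma((q+N)/2)\Gamma(1/2)}\dproj(x)^{q/2}$ and then applies Stirling, which is precisely your $C\sqrt{q/N}$ bound), then apply Minkowski with exponent $q/p\geq 1$ to exchange $\mathbb{E}$ and $\sum_x$. For \eqref{e:bddmult} your deterministic bound $|u(x)|\leq \dproj(x)^{1/2}$ already yields the claim with $C_K=1$ for \emph{all} $q$, so the parenthetical about ``absorbing the $\sqrt{q/N(I)}$ factor'' is unnecessary---and the alternative ``$\sqrt{q}\leq\sqrt{q_0(K)}$'' would in fact fail, since $q$ is unbounded in the statement.
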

\begin{proof}
For $N(I)=1$, the result is straighforward. In the sequel we assume $N(I)\geq 2$. Fix $x\in V$. We have
\begin{align}
\mathbb{E}(|u(x)|^q)&=q\int_0^{\infty}t^{q-1}\mathbb{P}_I(|u(x)|>t)dt\nonumber\\
&=2\frac{\Gamma\left(\frac{N(I)}{2}\right)}{\Gamma\left(\frac{N(I)-1}{2}\right)\Gamma\left(\frac12\right)}q\dproj(x)^{q/2}\int_0^{\pi/2}(\cos\theta)^{q-1} \sin \theta \int_0^{\theta} (\sin\varphi)^{N(I)-2} d\varphi d\theta\nonumber\\
&= \frac{\Gamma\left(\frac{q+1}{2}\right)\Gamma\left(\frac{N(I)}{2}\right)}{\Gamma\left(\frac{q+N(I)}{2}\right)\Gamma\left(\frac12\right)}\dproj(x)^{q/2} \label{e:Gamma}
\end{align}
where from first to second line we made the change of variables $t=\dproj(x)^{1/2}\cos(\theta)$ and used Lemma \ref{l:u(x)grand}, and from second to third line we used Fubini's theorem and identities involving the beta function to compute the integrals. Then, using Stirling's approximation, we notice that
\begin{equation}\label{e:stirling}
\left(\frac{\Gamma\bigl(\frac{q+1}{2}\bigr)}{\Gamma\bigl(\frac12\bigr)}\right)^{\frac1q}\leq C\sqrt{q}, \qquad \left(\frac{\Gamma\bigl(\frac{N(I)}{2}\bigr)}{\Gamma\bigl(\frac{q+N(I)}{2}\bigr)}\right)^{\frac1q}\leq \frac{C}{N(I)^{\frac12}}
\end{equation}
for some universal constant $C$ (independent of $q\geq 2$ and $N(I)\geq 1$).
We conclude, using Minkowski inequality (recall $p\leq q$), that
\begin{equation}
\left[\mathbb{E}\left(\|u\|_{L^p}^q\right)\right]^{1/q}\leq \|\left(\mathbb{E}|u(\cdot)|^q\right)^{\frac 1 q} \|_{L^p}\\
\leq  C \sqrt{q}\frac{1}{N(I)^{1/2}} \|\dproj(\cdot)^{1/2}\|_{L^p} \end{equation}
which is exactly \eqref{e:exptoq}. And \eqref{e:bddmult} is deduced directly from \eqref{e:Gamma} and Minkowski's inequality without using \eqref{e:stirling}.
\end{proof}



\section{Delocalized eigenbases for transitive graphs and their generalizations}\label{s:symmetricgraphs}
Our goal in this section is to give short proofs of Theorems \ref{t:trans} and \ref{t:smallscale}. Our arguments extend to products of graphs, when one of the two graphs in the product is vertex-transitive.

Given a graph $G$, we first explain how to pick an $L^2$-orthonormal basis of eigenvectors of its adjacency matrix $A_G$ uniformly at random. We denote by $E_1,\ldots,E_K$ the distinct eigenspaces of $A_G$, where $E_k$ has dimension $m_k$. We identify the space of $L^2$-orthonormal bases of $E_k$ with the orthogonal group $O(m_k)$ and endow this space with its Haar measure denoted by $\nu_k$. There holds
\begin{equation}\label{e:eigendecompo}
L^2(V)=\bigoplus_{k\in[K]} E_k
\end{equation}
where $V$ is the set of vertices of $G$. The set of orthonormal eigenbases of $L^2(V)$ compatible with the decomposition \eqref{e:eigendecompo} is
$$
\mathcal{B}= O(m_1)\times \ldots\times O(m_K)
$$
and it is endowed with the product probability measure $\nu=\otimes_k \nu_k$.

\subsection{Proof of Theorem \ref{t:trans}}\label{s:vertex}

Fix an arbitrary set $I\subset \R$. For any $x\in V$,
\begin{equation}\label{e:cstsum}
\frac{\dproj(x)}{N(I)}=\frac{1}{n}
\end{equation}
since $\dproj(x)$ does not depend on $x$ and the sum over $x$ is equal to $1$. We plug \eqref{e:cstsum} into \eqref{e:exptoq} (with $p=q$): for the random function $u$ given by \eqref{e:randomu} we obtain
\begin{equation}\label{e:bornesup}
\mathbb{E}\left(\|u\|_{L^q}^q\right)\leq \left(C \sqrt{q} n^{\frac1q -\frac12}\right)^{q}
\end{equation} 
To prove \eqref{e:Lqbasis}, we observe that 
for any $f:V\rightarrow\mathbb{C}$, $\|f\|_{L^\infty}\leq \|f\|_{L^q}$.
We set $q=\log n$ and we apply the Markov inequality to get for any $\Lambda>0$
\begin{multline}
 \mathbb{P}_I \left( \| u \|_{L^\infty} \geq \Lambda C\log (n)^{\frac12} n^{-\frac 1 2}\right) = \mathbb{P}_I \left( \| u \|^q _{L^\infty} \geq \Lambda^q C^q \log (n)^{\frac{q}{2}} n^{-\frac q 2}\right)\\
\leq \mathbb{P}_I \left( \| u \|^q _{L^q} \geq \Lambda^q C^q \log (n)^{\frac{q}{2}} n^{-\frac q 2}\right)\leq \frac{ \left(C \sqrt{q} n^{\frac1q - \frac 1 2} \right)^q} {\Lambda^q C^q \log (n)^{\frac{q}{2}}n^{-\frac q 2}}=n^{1-\log(\Lambda)} \label{e:Linftytransitive}
\end{multline}

Fix $k\in[K]$ and set $I$ to be the singleton containing the eigenvalue corresponding to the eigenspace $E_k$. In particular $N(I)=m_k$. For any $\ell_0\in [m_k]$, the map 
$$
O(m_k)\ni (b_\ell)_{\ell\in[m_k]} \mapsto b_{\ell_0}\in\mathbb{S}_{\lambda_k}
$$
sends the measure $\nu_k$ to the measure $\mathbb{P}_{\{\lambda_k\}}$ and consequently, according to \eqref{e:Linftytransitive},
$$
\nu_k\left(\left\{(b_\ell)_{\ell\in[m_k]}\in O(m_k); \| b_{\ell_0}\|_{L^\infty} \geq \Lambda C\log (n)^{\frac12} n^{-\frac 1 2}\right\}\right)\leq n^{1-\log(\Lambda)}
$$
We deduce by the union bound
$$
\nu_k\left(\left\{(b_\ell)_{\ell\in[m_k]}\in O(m_k); \exists \ell_0\in[m_k], \, \| b_{\ell_0}\|_{L^\infty} \geq  \Lambda C \log (n)^{\frac12} n^{-\frac 1 2}\right\}\right)\leq  m_kn^{1-\log(\Lambda)}
$$
and finally
$$
\nu\left(\left\{(b_{k,\ell})_{k\in[K], \ell\in[m_k]}\in\mathcal{B};\ \forall k\in[K],\ell\in[m_k], \; \| b_{k,\ell}\|_{L^\infty} \leq  \Lambda C \log (n)^{\frac12} n^{-\frac 1 2}\right\}\right)\geq 1-n^{2-\log(\Lambda)}
$$
follows by union bound over $k\in[K]$, which proves \eqref{e:Lqbasis}. The proof of \eqref{e:Lqbasisq} follows exactly the same lines, except that in \eqref{e:bornesup} and \eqref{e:Linftytransitive} we need to use an arbitrary $q$ independent of $n$, and not $q=\log(n)$.

\begin{remark}\label{r:bdded}
 One can also prove a deterministic bound in terms of the maximal multiplicity $M_n$ of the adjacency matrix $A_G$: any $L^2$-normalized eigenvector $u$ of $A_G$ verifies
\begin{equation}\label{e:optimized}
\forall q\in [2,+\infty], \qquad \|u\|_{L^q}\leq M_n^{\frac12}n^{\frac1q-\frac12}
\end{equation}
(with $q=+\infty$ allowed). Indeed, setting $I=\{\lambda\}$ where $\lambda$ is the eigenvalue associated to $u$, we have for any $x\in V$
$$
|u(x)|=\Bigl|\sum_{\lambda_k=\lambda}a_{\lambda_k}\psi_{\lambda_k}(x)\Bigr|\leq \dproj(x)^{1/2}= \left(\frac{N(I)}{n}\right)^{\frac12}\leq \left(\frac{M_n}{n}\right)^{\frac12}
$$
since $\dproj(x)$ does not depend on $x$, which implies \eqref{e:optimized}. In particular, if $M_n\ll \log(n)$ this improves \eqref{e:Lqbasis}. This is for instance the case for cycle graphs, since $M_n=2$.
\end{remark}

\subsection{Generalization to products of graphs}\label{s:product}
Theorem \ref{t:trans} may be generalized at no cost to products of graphs of all kinds, as soon as one of the two graphs in the product is vertex-transitive; the results below are meaningful when this vertex-transitive graph has a large number of vertices.
\begin{definition} 
A graph product of two graphs $G$ and $H$ with sets of vertices denoted by $V(G)$ and $V(H)$ is a new graph whose vertex set is $V(G)\times V(H)$ and where, for any two vertices $(g,h)$ and $(g',h')$ in the product, the adjacency of those two vertices is determined entirely by the adjacency (or equality, or non-adjacency) of $g$ and $g'$, and that of $h$ and $h'$. 
\end{definition}
\begin{remark}
To define a graph product, there are $3\cdot 3-1=8$ different choices to make\footnote{for instance, one of them is to decide if we put an edge between $(g,h)$ and $(g',h')$ when $g\sim g'$ and $h= h'$. The case where $g=g'$ and $h=h'$ is not considered.} and thus there are $2^8=256$ different types of graph products that can be defined. The most commonly used are the Cartesian product, the lexicographic product, the strong product and the tensor product.
\end{remark}


In this section, the symbol $\times$ denotes one of the 256 possible notions of products of graphs.
\begin{definition} 
We say that a graph is a product of type $(\ell,m)$ if it is equal to $G\times H$ where $G$ is a vertex-transitive graph with $\ell$ vertices, and $H$ is an arbitrary graph with $m$ vertices.
\end{definition}
The following result extends Theorem \ref{t:trans} to products of graphs, with bounds which depend on $\ell$ instead of $n$.
\begin{theorem}\label{t:product}
There exists $C>0$ such that the following holds.  Let $G\times H$ be a product graph of type $(\ell,m)$, with $n=\ell m$ vertices. Then for any $\Lambda>0$, with probability $\geq 1-n\ell^{1-\log(\Lambda)}$ on the choice of an orthonormal eigenbasis $\mathcal{B}$ of $A_G$, any $u\in\mathcal{B}$ verifies 
\begin{equation}\label{e:Linftylogell}
\|u\|_{L^\infty}\leq \Lambda C\sqrt{\frac{\log(\ell)}{\ell}}.
\end{equation}
Also, there exists $C>0$ such that for any $q\in[2,+\infty)$ and any $\Lambda>0$, with probability $\geq 1-n\Lambda^{-q}$ on the choice of an element  $\mathcal{B}\in \mathcal{B}(G)$ picked following $\nu$, any $u\in\mathcal{B}$ verifies
\begin{equation}\label{e:Lqbasisqprod}
\|u\|_{L^q}\leq \Lambda C\sqrt{q} \ell^{\frac1q-\frac12}.
\end{equation}
\end{theorem}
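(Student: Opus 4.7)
The plan is to mimic the proof of Theorem \ref{t:trans}, replacing the deterministic identity \eqref{e:cstsum} by a bound on the diagonal $\dproj$ of the spectral projector that exploits the partial symmetry coming from the vertex-transitive factor $G$. The key observation is that any graph automorphism $\sigma$ of $G$ lifts to an automorphism of $G\times H$ via $(g,h)\mapsto(\sigma g,h)$: indeed, adjacency in $G\times H$ is determined entirely by the relations $g=g'$, $g\sim g'$, $g\not\sim g'$ together with their analogues in $H$, all of which are preserved by $\sigma$. Consequently $A_{G\times H}$ commutes with this action, and so does the orthogonal projector $\Pi_I$ associated to any $I\subset\R$. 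Since $G$ is vertex-transitive, this forces $\pi_I(h):=\Pi_I((g,h),(g,h))$ to depend only on $h\in V(H)$.

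Next I would turn this invariance into a quantitative $L^{p/2}$ bound on $\dproj$. Taking the trace of $\Pi_I$ gives $\ell\sum_{h\in V(H)}\pi_I(h)=N(I)$, hence $\sum_h\pi_I(h)=N(I)/\ell$; since every $\pi_I(h)\geq 0$, this also yields the pointwise bound $\pi_I(h)\leq N(I)/\ell$. Combining the two,
\[
\|\dproj\|_{L^{p/2}}^{p/2}=\ell\sum_{h\in V(H)}\pi_I(h)^{p/2}\leq \ell\Bigl(\frac{N(I)}{\ell}\Bigr)^{p/2-1}\sum_h\pi_I(h)=\ell\Bigl(\frac{N(I)}{\ell}\Bigr)^{p/2},
\]
so $\|\dproj\|_{L^{p/2}}\leq \ell^{2/p-1}N(I)$. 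Plugging this into \eqref{e:exptoq} with $p=q$ yields
\[
\mathbb{E}\bigl(\|u\|_{L^q}^q\bigr)^{1/q}\leq C\sqrt{q}\,\ell^{1/q-1/2}
\]
for any $u$ drawn uniformly from the unit sphere of an eigenspace $E_I$ of $A_{G\times H}$, with $I$ a singleton.

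From this estimate the conclusion follows by repeating verbatim the Markov-inequality and union-bound step of Section \ref{s:vertex}: for \eqref{e:Lqbasisqprod} I would keep $q\in[2,+\infty)$ fixed and obtain probability $\Lambda^{-q}$ that a single fixed basis vector violates the bound; for \eqref{e:Linftylogell} I would specialize to $q=\log\ell$ and combine Markov with $\|u\|_{L^\infty}\leq \|u\|_{L^q}$ to obtain probability $\ell^{1-\log\Lambda}$ per basis vector. A union bound over the $n=\ell m$ elements of $\mathcal{B}$ produces the claimed probabilities $n\Lambda^{-q}$ and $n\ell^{1-\log\Lambda}$. The main (modest) obstacle is the second step above: although $G\times H$ need not itself be vertex-transitive, one must recognize that the diagonal of $\Pi_I$ inherits from $G$ just enough invariance to reduce the problem to an $L^{p/2}$ estimate on $V(H)$ with total mass $N(I)/\ell$; the remaining interpolation is an elementary Hölder-type inequality.
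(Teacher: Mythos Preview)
Your proof is correct and follows essentially the same approach as the paper: both exploit that automorphisms of $G$ lift to automorphisms of $G\times H$, deduce that $\dproj$ depends only on the $H$-coordinate, bound $\|\dproj\|_{L^{q/2}}\leq N(I)\,\ell^{2/q-1}$, and then repeat the Markov/union-bound argument of Section~\ref{s:vertex}. The only cosmetic difference is the elementary inequality used for the $L^{q/2}$ estimate: the paper applies $\sum_h c(h)^{q/2}\leq(\sum_h c(h))^{q/2}$, whereas you use $\sum_h\pi_I(h)^{p/2}\leq(\max_h\pi_I(h))^{p/2-1}\sum_h\pi_I(h)$; both yield the same bound.
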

\begin{remark}If instead of choosing an eigenbasis randomly we choose only one eigenvector randomly, we get~\eqref{e:Linftylogell}  with probability $\geq 1-\ell^{1-\log(\Lambda)}$,  and~\eqref{e:Lqbasisqprod} with probability $\geq 1-\Lambda^{-q}$ (i.e., we save a factor $n$  in the proof since we avoid one union bound compared to the case where a whole eigenbasis is picked at random).
\end{remark}
\begin{proof}
We set 
\begin{equation}\label{e:rootineq}
v(x)=\frac{1}{N(I)}\dproj(x).
\end{equation}
We notice that $\sum_{x\in V(G\times H)} v(x)=1$ and that if $x=(g,h)$ and $x'=(g',h)$ with $g,g'\in V(G)$ and $h\in V(H)$, then $v(x)=v(x')$ since $G$ is vertex-transitive. Therefore, $v(x)\leq 1/\ell$ for any $x\in V(G)\times V(H)$.
We denote by $c(h)$ the value of $v(x)$ for $x=(g,h)$ (independent of $g\in G$). We have
\begin{equation*}
\frac{1}{N(I)^{q/2}}\|\dproj\|_{L^{q/2}}^{q/2}=\ell\sum_{h\in H} c(h)^{q/2} \leq \ell\left(\sum_{h\in H} c(h)\right)^{q/2}= \ell^{1-\frac{q}{2}}\left(\sum_{h\in H} \ell c(h)\right)^{q/2}=\ell^{1-\frac{q}{2}}.
\end{equation*}
We deduce from \eqref{e:exptoq} 
$$
\mathbb{E}\left(\|u\|_{L^q}^q\right)\leq \left(C \sqrt{q} \ell^{\frac1q -\frac12}\right)^{q}.
$$
The proof is now exactly the same as in Theorem \ref{t:trans}, with $n$ replaced by $\ell$, and $q=\log \ell$.
\end{proof}


\subsection{Proof of Theorem \ref{t:Gaussianstat}}\label{s:Gauss}
The proof of Theorem \ref{t:Gaussianstat} relies on the general principle that for large collections of high-dimensional data, most one-dimensional projections of the data are approximately Gaussian. This fact has first been proved by Diaconis and Freedman in \cite{diaconis}, and then quantitative bounds have been derived by Meckes \cite{meckes}. Her result reads as follows.
\begin{theorem}\cite[Theorem 3]{meckes} \label{t:meckes}
Let $\{x_i\}_{i=1}^n$ be deterministic vectors in $\R^m$, let $\sigma^2$ be defined by $\frac1n \sum_{i=1}^n |x_i|^2=\sigma^2m$, and assume that
\begin{equation*}
\frac1n \sum_{i=1}^n \left| \sigma^{-2}|x_i|^2-m\right| \leq A \quad \text{and } \quad \forall \theta\in\mathbb{S}^{m-1},\ \frac1n \sum_{i=1}^n \langle \theta,x_i\rangle^2\leq B
\end{equation*}
for some $A,B\geq 0$. Consider the random measure $\mu_n^\theta$ on $\R$ which puts mass $\frac1n$ at each of the points $\langle x_1,\theta\rangle, \ldots,\langle x_n,\theta\rangle$. If $\theta$ is chosen uniformly from $\mathbb{S}^{m-1}$ and 
$$
B\geq \varepsilon \geq \max\left( \left[\frac{3.2^6 \pi B}{\sqrt{m-1}}\right]^{2/5},\frac{2(A+2)}{m-1}\right)
$$
then
$$
\mathbb{P}\left[d_{\rm BL}(\mu_n^\theta,\mathcal{N}(0,\sigma^2))>\varepsilon\right]\leq \frac{c_1\sqrt{B}}{\varepsilon^{3/2}}\exp \left( - \frac{c_2(m-1)\varepsilon^5}{B^2}\right)
$$
with $c_1=48\sqrt{\pi}$, $c_2=3^{-2}2^{-16}$, and $d_{\rm BL}$ denoting the bounded Lipschitz distance.
\end{theorem}

Let us explain how to deduce Theorem \ref{t:Gaussianstat} from Theorem \ref{t:meckes}. We denote by $\psi_1,\ldots,\psi_m$ an orthonormal basis of $E$ and by $\theta$ an element of the unit sphere of $E$ chosen uniformly at random. We also label the vertices as $V=\{1,\ldots,n\}$. For any $i\in V$, we have $x_i=\sqrt{n}(\psi_1(i),\ldots,\psi_m(i))$, and $|x_i|^2=m$ due to \eqref{e:cstsum}. Moreover, $u=\sum_{j=1}^m \theta_j\psi_j\in\mathbb{S}^{n-1}$, therefore $\sqrt{n}u_i=\langle \theta,x_i\rangle$. Then we apply Theorem \ref{t:meckes}. We observe that $\sigma=1$. Also we may take $A=0$ and $B=1$ since
$$
\frac 1 n \sum_{i=1}^n \langle\theta, x_i\rangle^2 = \sum_{i=1}^n \sum_{k,\ell=1}^m \theta_k \theta_\ell \psi_k (i) \psi_\ell(i)
=  \sum_{k,\ell=1}^m \theta_k \theta_\ell  \sum_{i=1}^n \psi_k (i) \psi_\ell(i)= \sum_{k,\ell=1}^m \theta_k \theta_\ell \delta_{k=\ell}  =1.
$$
All in all Theorem \ref{t:meckes} gives exactly Theorem \ref{t:Gaussianstat}.

\subsection{Proof of Theorem \ref{t:smallscale}}
We consider an eigenspace $E_k$ of dimension $m_k$, and we denote by $\psi_1,\ldots,\psi_{m_k}$ an orthonormal basis of $E_k$. As before, $\mathbb{P}_{\{\lambda_k\}}$ is the uniform probability measure on the unit sphere of $E_k$, and $u=\sum_{i=1}^{m_k} z_i\psi_i$ is a random vector following $\mathbb{P}_{\{\lambda_k\}}$ (in particular, $Z=(z_i)_{i\in[m_k]}$ is on the unit sphere of $E_k$). 

Notice that we may assume that each $f_i$ has mean $0$: $\frac1n \sum_{x\in V}f_i(x)=0$. In the sequel we fix $f\in L^2(V)$ with mean $0$. We have
\begin{equation*}
\sum_{x\in V} f(x)u(x)^2=Z^\top BZ
\end{equation*}
where
$$
B=(\beta_{ij})_{1\leq i,j\leq m_k}, \qquad \beta_{ij}=\sum_{x\in V} f(x)\psi_i(x)\psi_j(x).
$$
We notice that 
\begin{equation}\label{e:TrB}
\sum_{i=1}^{m_k}\beta_{ii}=\sum_{x\in V}f(x)\sum_{i=1}^{m_k}\psi_i(x)^2=\frac{m_k}{n}\sum_{x\in V}f(x)=0
\end{equation}
and
\begin{align}
\sum_{i,j=1}^{m_k}\beta_{ij}^2&=\sum_{x,y\in V}f(x)f(y)\left(\sum_{i=1}^{m_k} \psi_i(x)\psi_i(y)\right)^2\leq \sum_{x,y\in V}\frac{f(x)^2+f(y)^2}{2}\left(\sum_{i=1}^{m_k} \psi_i(x)\psi_i(y)\right)^2\nonumber\\
&=\sum_{x,y\in V}f(x)^2\left(\sum_{i=1}^{m_k} \psi_i(x)\psi_i(y)\right)^2=\sum_{x\in V}\sum_{i,j=1}^{m_k}f(x)^2 \psi_i(x)\psi_j(x)\sum_{y\in V}\psi_i(y)\psi_j(y)\nonumber \\
&=\sum_{x\in V}\sum_{i=1}^{m_k}f(x)^2\psi_i(x)^2=\frac{m_k}{n}\|f\|_{L^2}^2\label{e:TrB2}
\end{align}
where we used $\sum_{y\in V}\psi_i(y)\psi_j(y)=\delta_{ij}$.
Since $B$ is symmetric, we write $B=P^TDP$ where $P$ is orthogonal and $D=(d_i)_{1\leq i\leq m_k}$ is diagonal, and we deduce from \eqref{e:TrB}, \eqref{e:TrB2} that
\begin{equation}\label{e:pasbien}
\sum_{i=1}^{m_k} d_i={\rm Tr}(D)={\rm Tr}(B)=0, \qquad \sum_{i=1}^{m_k} d_{i}^2={\rm Tr}(D^2)={\rm Tr}(B^2)\leq \frac{m_k}{n}\|f\|_{L^2}^2.
\end{equation}
Finally, setting $Y=PZ$, which is also uniform on the unit sphere of $E_k$, we have
\begin{equation}\label{e:gaussian}
\sum_{x\in V} f(x)u(x)^2=Z^\top BZ=Y^\top DY.
\end{equation}

We may write $Y_i=\theta_i/\sqrt{\Theta}$ with $\theta_i$ independent standard real normal random variables and 
$$
\Theta=\sum_{i=1}^{m_k} |\theta_i|^2.
$$
We notice that  
\begin{equation}\label{e:Ptheta}
P(\Theta\geq m_k/2)\geq 1-e^{-\frac{m_k}{12}}
\end{equation}
(see \cite[Lemma 5.1]{mageethomas}). Also, applying \cite[Lemma 5.2(i)]{mageethomas} with $C= \frac{m_k}{n}\|f\|_{L^2}^2$ (due to \eqref{e:pasbien}) and $A=\sqrt{C}$, we get that for any $T>0$,
\begin{equation}\label{e:Ptheta2}
\mathbb{P}\left(\theta^\top D\theta\geq Tm_k/2\right)=\mathbb{P}\left(\sum_{i=1}^{m_k} d_i\theta_i^2\geq Tm_k/2\right)\leq 2\left(\frac{Tm_k}{2\sqrt{C}}+1\right)^{\frac12}\exp\left(-\frac{Tm_k}{4\sqrt{C}}\right)
\end{equation}
where $\theta=(\theta_1,\ldots,\theta_{m_k})$. We choose $T=\frac{t\|f\|_{L^2}}{\sqrt{n}}$. Combining \eqref{e:Ptheta} and \eqref{e:Ptheta2} we get 
\begin{equation}\label{e:ameliorer}
\mathbb{P}\left(Y^\top DY\geq \frac{t\|f\|_{L^2}}{\sqrt{n}}\right)\leq  2\left(\frac{t\sqrt{m_k}}{2}+1\right)^{\frac12}\exp\left(-\frac{t\sqrt{m_k}}{4}\right)+\exp\left(-\frac{m_k}{12}\right).
\end{equation}
Recalling \eqref{e:gaussian} and using a union bound over the random choice of the $m_k$ elements forming an orthonormal basis of $E_k$, this concludes the proof.

\section{$L^q$-delocalization of approximate eigenvectors of symmetric matrices} \label{s:results} 
This section is devoted to the proof of Theorem \ref{t:delocgeneralsituation}. We work in the setting of Section \ref{s:delocmatrix}. Our proof is based on the following lemma.
\begin{lemma}\label{l:median}
 Let $u\in\mathbb{S}_I$ be a random vector with law $\mathbb{P}_I$.
\begin{enumerate}
\item There exists $C_1>0$ universal (not depending on $I$) such that for any $q\in(2,+\infty)$,
\begin{equation}
\mathcal{M}_{q,I}\leq C_1\sqrt{q} N(I)^{\frac1q-\frac12}
\end{equation}
where $\mathcal{M}_{q,I}$ denotes the median of the random variable $\|u\|_{L^q}$.
\item Let $q\in (2,+\infty]$. Then, for any $r>0$, 
\begin{equation}\label{e:measconcunbounded}
\mathbb{P}_I(|\|u\|_{L^q}-\mathcal{M}_{q,I}|>r)\leq 4e^{-\frac{N(I)r^2}{2}}.
\end{equation}
\end{enumerate}
\end{lemma}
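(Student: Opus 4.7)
The plan is to deduce both parts of Lemma~\ref{l:median} from tools already available in the paper plus the standard spherical concentration of measure.

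For part (1), I start from the moment bound \eqref{e:exptoq} specialized to $p=q$ and estimate the factor $\|\dproj\|_{L^{q/2}}/N(I)$ on its right-hand side. The diagonal of an orthogonal projector satisfies $\dproj(x)=\Pi_I(x,x)\in[0,1]$ for every $x$, while $\sum_x \dproj(x)=\mathrm{tr}(\Pi_I)=N(I)$. Since $q/2\geq 1$, the pointwise inequality $\dproj(x)^{q/2}\leq \dproj(x)$ gives
$$\|\dproj\|_{L^{q/2}}^{q/2}=\sum_x \dproj(x)^{q/2}\leq N(I),\qquad \frac{\|\dproj\|_{L^{q/2}}}{N(I)}\leq N(I)^{\frac{2}{q}-1}.$$
Plugged back into \eqref{e:exptoq} this produces $\mathbb{E}(\|u\|_{L^q}^q)^{1/q}\leq C\sqrt{q}\,N(I)^{1/q-1/2}$. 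Markov's inequality applied to $\|u\|_{L^q}^q$ then yields $\mathbb{P}_I(\|u\|_{L^q}\geq 2^{1/q}C\sqrt{q}\,N(I)^{1/q-1/2})\leq 1/2$, so the median is controlled by $\sqrt{2}\,C\sqrt{q}\,N(I)^{1/q-1/2}$, i.e.\ $C_1=\sqrt{2}\,C$ works. The boundary case $N(I)=1$ is trivial since $\|u\|_{L^q}\leq\|u\|_{L^2}=1\leq\sqrt{q}$.

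For part (2), I identify $\mathbb{S}_I$ with $\mathbb{S}^{N(I)-1}\subset\R^{N(I)}$ via the coefficients $(z_k)_{\lambda_k\in I}$: the map $z\mapsto u(z)=\sum_k z_k\psi_{\lambda_k}$ is a linear isometry from $(\R^{N(I)},\|\cdot\|_{\ell^2})$ to its image in $(\R^n,\|\cdot\|_{L^2})$, and transports the uniform measure on the sphere to $\mathbb{P}_I$. Next I check that $F(z):=\|u(z)\|_{L^q}$ is $1$-Lipschitz with respect to the Euclidean norm on $\R^{N(I)}$: for every $q\in[2,+\infty]$ the elementary inequality $\|v\|_{\ell^q}\leq \|v\|_{\ell^2}$ on $\R^n$ (obtained by writing $|v_i|^q\leq \|v\|_{\ell^2}^{q-2}|v_i|^2$ for finite $q\geq 2$, and trivial for $q=\infty$) gives
$$|F(z)-F(z')|\leq \|u(z)-u(z')\|_{L^q}\leq \|u(z)-u(z')\|_{L^2}=\|z-z'\|_{\ell^2}.$$
Lévy's concentration inequality applied to the $1$-Lipschitz function $F$ on the unit sphere of $\R^{N(I)}$, in the form recorded in the appendix, then yields \eqref{e:measconcunbounded} at once.

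The only genuine difficulty is bookkeeping of constants: the classical isoperimetric form of spherical concentration involves the exponent $(N(I)-1)r^2/2$ and a prefactor like $\sqrt{\pi/2}$, whereas the statement asks for $N(I)\,r^2/2$ and prefactor $4$. A clean route is to use the Gaussian realization $z=g/\|g\|_{\ell^2}$ with $g\sim \mathcal{N}(0,I_{N(I)})$, combine standard Gaussian concentration for the $1$-Lipschitz map $g\mapsto F(g/\|g\|)$ with a union bound on the event $\{\bigl|\|g\|-\sqrt{N(I)}\bigr|\leq \tfrac12\sqrt{N(I)}\}$, and absorb the two sides of the deviation into the constant $4$; since the paper collects this kind of statement in its appendix, invoking it directly makes the derivation transparent and the constants match by inspection.
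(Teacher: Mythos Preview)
Your proof is correct and follows essentially the same route as the paper: for (1) you bound $\|\dproj\|_{L^{q/2}}$ using $0\leq\dproj(x)\leq 1$ and $\sum_x\dproj(x)=N(I)$ (the paper phrases this as an $L^\infty$--$L^1$ interpolation, but the content is identical), then control the median via the $q$-th moment and \eqref{e:exptoq}; for (2) you check that $z\mapsto\|u(z)\|_{L^q}$ is $1$-Lipschitz and invoke spherical concentration. Your closing worry about constants is unnecessary: the appendix statement (Theorem~\ref{t:concentrationmeasure}) is already recorded with dimension $d=N(I)$ in the exponent and prefactor $4$, so it applies verbatim without any Gaussian-realization detour.
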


\begin{proof}
We first prove that for any $q\in[2,+\infty]$
\begin{equation}\label{e:OLq}
\|\dproj\|_{L^{q/2}}\leq N(I)^{2/q}.
\end{equation}
For $j\in [n]$ we denote by $\delta_j$ the vector of $\R^n$ whose only non-zero coordinate is the $j$-th one, whose value is $1$. We fix an orthonormal basis $(\psi_{\lambda_k})_{k\in[n]}$ of eigenvectors of $H_n$. Denoting by $\psi_{\lambda_k}(i)$ the $i$-th coordinate of $\psi_{\lambda_k}$ for $i\in[n]$, we have
\begin{equation}\label{e:super}
\|\dproj\|_{L^\infty}=\max_{j\in[n]}\sum_{\lambda_k\in I}\psi_{\lambda_k}(j)^2\leq \max_{j\in[n]} \sum_{k=1}^n \psi_{\lambda_k}(j)^2=\max_{j\in[n]}\sum_{k=1}^n \langle \delta_j, \psi_{\lambda_k}\rangle^2=\max_{j\in[n]}\|\delta_j\|^2=1.
\end{equation}
We also notice that
\begin{equation}\label{e:L1}
\|\dproj\|_{L^{1}}=\sum_{j\in[n]}\sum_{\lambda_k\in I} \psi_{\lambda_k}(j)^2=N(I).
\end{equation}
Using the interpolation inequality $\|f\|_{L^r}\leq \|f\|_{L^\infty}^{1-\frac1r}\|f\|_{L^1}^{\frac1r}$ with $r=q/2$, we obtain \eqref{e:OLq}.

Point (1) follows from
\begin{align*}
\frac12 \mathcal{M}_{q,I}\leq \mathbb{E}(\|u\|_{L^q})\leq\mathbb{E}(\|u\|_{L^q}^q)^{1/q} \leq C\sqrt{q}N(I)^{\frac{1}{q}-\frac{1}{2}}
\end{align*}
where the first inequality comes from the fact that $\|u\|_{L^q}$ is a non-negative random variable, and the last one from \eqref{e:OLq} plugged into \eqref{e:exptoq}.

We turn to Point (2). Let $F(u)=\|u\|_{L^q}$. We notice that $\|F\|_{\text{lip}}\leq 1$ since
$$
|F(u)-F(v)|\leq \|u-v\|_{L^q}\leq \|u-v\|_{L^2}.
$$
Applying Theorem \ref{t:concentrationmeasure}, we get \eqref{e:measconcunbounded}.
\end{proof}

The proof of Theorem \ref{t:delocgeneralsituation} is now straightforward.
\begin{proof}[Proof of Theorem \ref{t:delocgeneralsituation}]
Point (i) follows by combining Point (1) and Point (2) of Lemma \ref{l:median}, with $r=\Lambda C_1\sqrt{q} N(I)^{\frac1q-\frac12}$ and taking  $C=2C_1$. For Point (ii) we apply Point (i) with $q=\log(N(I))$ and we use the elementary inequality $\| u\|_{L^\infty} \leq \| u\|_{L^q}$.
\end{proof}

\begin{remark}
When $N(I)$ remains bounded as $n\rightarrow +\infty$, Theorem \ref{t:delocgeneralsituation} becomes almost empty. But the method of proof of Theorem \ref{t:delocgeneralsituation} can be straightforwardly adapted to prove smallness of the $L^q$-norm of $u$ with high $\mathbb{P}_I$-probability, under the additional assumption that $\|\Pi_I\|_{L^2\rightarrow L^q}=o(1)$ as $n\rightarrow +\infty$. We do not detail this here. 
\end{remark}




\section{Stronger $L^q$-delocalization for general graphs under Green function bounds}\label{s:delocunderGreen}

The goal of this section is to prove Theorem \ref{t:Linftygreen} and apply it to random lifts of a fixed base graph. The general structure of the proof of Theorem \ref{t:Linftygreen} is borrowed from \cite[Theorem 5.8]{bordenave}, which shows a spectral projector estimate for graphs close to regular graphs. Our proof is an adaptation of this proof to the case of general local weak limits supported on rooted trees (not necessarily regular).

\subsection{Absolute continuity of $\overline{\mu}$}\label{s:abscont}
 Denote by $\mu^{G_n}$ the spectral measure of the adjacency matrix $A(G_n)$, i.e., 
\begin{equation}\label{e:empmeas}
\mu^{G_n}=\frac1n \sum_{k=1}^{|V_n|} \delta_{\lambda_k}
\end{equation} 
where the $\lambda_k$ denote the eigenvalues of $A(G_n)$. Two rooted graphs $(G, o)$ and $(G',o')$ are called equivalent if there is a graph isomorphism $\varphi:G\rightarrow G'$ such that $\varphi(o)=o'$.  If $(G;o)$ is a rooted graph, we denote by $[G;o]$ its equivalence class, and $\mathscr{G}_*$ denotes the set of equivalence classes of connected rooted graphs. Let $\mu^{[G;o]}$ denote the spectral measure of a rooted graph $(G;o)$, defined as the unique probability measure on $\R$ such that
\begin{equation}\label{e:rootedspecmeas}
\forall z, \Im(z)>0, \qquad \langle \delta_o, (A(G)-z\Id)^{-1}\delta_o\rangle=\int_{\R}\frac{1}{\lambda-z} d\mu^{[G;o]}(\lambda).
\end{equation}
According to Proposition \ref{p:limspectralmeasure}, $\mu^{G_n}$ converges to the mean of the empirical measures under $\overline{\mathbb{P}}$, i.e,
\begin{equation}\label{e:overlinemu}
\overline{\mu}=\int_{\mathscr{G}_*} \mu^{[G;o]}d\overline{\mathbb{P}}([G;o]).
\end{equation}
We recall that $\overline{\mathbb{P}}$ is supported on (equivalence classes of) rooted trees.
\begin{proposition}
The measure $\overline{\mu}$ is absolutely continuous in $I_1$, with bounded density.
\end{proposition}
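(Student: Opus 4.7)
The plan is to compute the Stieltjes transform of $\overline{\mu}$, bound its imaginary part uniformly on $I_1$ using \Green, and then invoke the Stieltjes inversion formula.

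First, for $z=\lambda+i\eta$ with $\eta>0$, I consider the Stieltjes transform
$$
\overline{m}(z):=\int_{\R}\frac{d\overline{\mu}(x)}{x-z}.
$$
Since $|1/(x-z)|\leq 1/\eta$, Fubini's theorem applies and, combining \eqref{e:overlinemu} with the defining identity \eqref{e:rootedspecmeas}, gives
$$
\overline{m}(z)=\int_{\mathscr{G}_*}\int_{\R}\frac{d\mu^{[G;o]}(x)}{x-z}\,d\overline{\mathbb{P}}([G;o])=\mathbb{E}_{(T;o)\sim \overline{\mathbb{P}}}\bigl[R^T_{oo}(z)\bigr].
$$

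Next, I take imaginary parts. Since $\Im R^T_{oo}(\lambda+i\eta)\geq 0$ and $q/2\geq 1$, Jensen's inequality applied to the convex function $x\mapsto x^{q/2}$ on $[0,+\infty)$ yields
$$
\Im \overline{m}(\lambda+i\eta)=\mathbb{E}\bigl[\Im R^T_{oo}(\lambda+i\eta)\bigr]\leq \Bigl(\mathbb{E}\bigl[(\Im R^T_{oo}(\lambda+i\eta))^{q/2}\bigr]\Bigr)^{2/q}.
$$
By \Green, the right-hand side is bounded by some constant $M$ uniformly for $\lambda\in I_1$ and $\eta\in(0,1)$.

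Finally, I invoke the Stieltjes inversion formula: for any interval $(a,b)\subset I_1$ that is a continuity set of $\overline{\mu}$ (i.e., $\overline{\mu}(\{a\})=\overline{\mu}(\{b\})=0$),
$$
\overline{\mu}((a,b))=\lim_{\eta\to 0^+}\frac{1}{\pi}\int_a^b \Im \overline{m}(\lambda+i\eta)\,d\lambda\leq \frac{M}{\pi}(b-a).
$$
A standard approximation argument extends this bound to all Borel subsets of $I_1$, showing that $\overline{\mu}$ restricted to $I_1$ is absolutely continuous with density bounded by $M/\pi$. There is no serious obstacle here: the Fubini exchange is trivially justified by the bound $1/\eta$ at fixed $\eta>0$, and the Stieltjes inversion step is standard once a uniform bound on $\Im \overline{m}$ is in hand. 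Note that only the weaker consequence $\mathbb{E}[(\Im R^T_{oo})^{q/2}]\leq C$ of \Green{} is used at this stage — the full strength of the assumption will be exploited later in the proof of Theorem \ref{t:Linftygreen}.
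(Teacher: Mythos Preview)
Your proof is correct and takes a genuinely different route from the paper's. The paper first uses the $|R^T_{oo}|^2$ part of \Green\ together with Klein's criterion \cite{klein} to show that for $\overline{\mathbb{P}}$-almost every rooted tree $(T;o)$ the \emph{individual} spectral measure $\mu^{[T;o]}$ is absolutely continuous on $I_1$ with density $\frac{1}{\pi}\Im R^T_{oo}(\lambda+i0)$, and only then averages over $\overline{\mathbb{P}}$. You bypass this pointwise step entirely by averaging first: you bound $\Im \overline{m}(\lambda+i\eta)$ directly via Jensen and the $(\Im R^T_{oo})^{q/2}$ part of \Green, then apply Stieltjes inversion once at the level of $\overline{\mu}$. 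Your approach is more economical---it avoids the external reference to Klein's theorem and does not touch the $|R^T_{oo}|^2$ part of the hypothesis at all---while the paper's argument yields the additional information that each individual $\mu^{[T;o]}$ is absolutely continuous, which is not needed for the proposition itself but is conceptually informative.
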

\begin{proof}
Using \Green\, we obtain that for $\overline{\mathbb{P}}$-almost every rooted tree $[T;o]$,  
$$
\liminf_{\eta \downarrow 0} \int_{I_1} |R^T_{oo}(\lambda+i\eta)|^2 d\lambda <+\infty.
$$
In the sequel we work under this $\overline{\mathbb{P}}$-almost sure event. According to \cite[Theorem 4.1]{klein}, the spectral measure $\mu^{[T;o]}$ is absolutely continuous in $I_1$, with density $\rho^{[T;o]}(\lambda)=\frac{1}{\pi}\Im(R^T_{oo}(\lambda+i0))$ with respect to the Lebesgue measure $\ell$ on $\R$.

Denoting by $C>0$ the supremum in the right-hand side of \Green, we get that for any borelian $B\subset I_1$,
$$
\overline{\mu}(B)= \int_{\mathscr{G}_*} \rho^{[T;o]}(B) d\overline{\mathbb{P}}([T;o])\leq \ell(B)\sup_{\lambda\in I_1, \eta\in(0,1)} \mathbb{E}_{(T;o)\sim \overline{\mathbb{P}}}\left(\frac{1}{\pi}\Im R^T_{oo}(\lambda+i\eta)\right)\leq \frac{C^{\frac 2 q}}{\pi}\ell(B)
$$
according to \Green. Hence $\overline{\mu}$ is absolutely continuous, with density bounded by $\frac{C^{\frac2q}}{\pi}$.
\end{proof}

\subsection{Preliminary lemmas} Recall the notation $\overline{\mathbb{P}}_{G_n}^{(h)}$ introduced in \eqref{e:PGnh}. The goal of this subsection is to prove the following proposition.
\begin{proposition}[Spectral projector estimate] \label{p:delocgreen}
Let $(G_n)$ be a family of graphs. We assume that there exist $h_0,L$ such that for some $h=h(n)\geq h_0$, \eqref{e:peudecycles2} holds for any $n\in\N$.
 Let $I_1$ be an open set where \Green\ is satisfied, and let $c_0>0$ such that $\overline{\mu}$ has density $\geq c_0>0$ in $I_1$. 
Then there exist $C,C'>0$ (depending on $c_0, L,h_0$) such that for any interval $I$ of length at least $C(\log h)/h$ such that $I\subset I_1$,
\begin{equation}\label{e:delocexpbstgreen}
\frac{\|\dproj\|_{L^{q/2}}}{N(I)}\leq C'n^{\frac{2}{q} -1}
\end{equation}
\end{proposition}


In the sequel we set $\zeta=e^2\pi$. For any (finite or infinite) graph $H$ whose adjacency operator $A(H)$ is essentially self-adjoint, and for any $z\in\mathbb{C}$, we set $R^H(z)=(A(H)-z\Id)^{-1}$. We start by recalling the following result, which states that if two rooted graphs are isomorphic up to distance $h$ of their root, then their resolvents are $h^{-1}$-close. 
\begin{lemma} \label{l:closeresolvent}
For $i=1,2$, let $(G_i,o)$ be a  rooted graph, and assume that the adjacency operator $A(G_i)$ is essentially self-adjoint. Assume further that $(G_1;o)_h$ and $(G_2;o)_h$ are isomorphic for some $h\in\mathbb{N}$ and that $\|A(G_i)\|\leq b$ for $\|\cdot\|$ the operator norm. Then for any $z\in\mathbb{C}$ such that $\Im(z)\geq \zeta b \lceil \log(2h)\rceil / 2h$,
$$
|R_{oo}^{G_1}(z)-R_{oo}^{G_2}(z)|\leq \frac{1}{\zeta bh}.
$$
\end{lemma}


For a proof, see e.g. \cite[Corollary 5.5]{bordenave}. The next proposition tells us that the spectral measure $\mu_{G_n}$ of $G_n$ is close to $\overline{\mu}$ for large $n$, at least over not too small intervals $I\subset \R$. Recall that $D$ denotes a uniform upper bound on the degree of the graphs $G_n$, see Section \ref{s:feshortloops}.
\begin{lemma}[Local Kesten-McKay law]\label{l:lockestenmckay}
Let $0<\delta<1$ and assume that there exists $h\geq 1$ such that
$$
\delta\geq \max\left(hd_{\TV}(\overline{\mathbb{P}}^{(h)}_{G_n},\overline{\mathbb{P}}^{(h)}), \frac{1}{h}\right).
$$
Then for any interval $I\subset \R$ of length $|I|\geq \frac{20 D\log(2h)}{h}\left(\frac{1}{\delta}\log \frac{1}{\delta}\right)$ we have
$$
\frac{|\mu^{G_n}(I)-\overline{\mu}(I)|}{|I|}\leq C\delta,
$$
where the constant $C$ only depends on $D$.
\end{lemma}
\begin{proof}[Proof of Lemma \ref{l:lockestenmckay}]
Let $t=\zeta D\lceil \log 2h\rceil / (2h)\leq \frac{20D \log(2h)}{h}$. We denote by $\mathcal{R}_h$ the set of rooted graphs $(H;o')$ with depth $\leq h$. Let $(H;o')\in\mathcal{R}_h$. For simplicity of notation we simply write $H$ instead of $(H;o')$ to denote this rooted graph. We introduce
\begin{equation}\label{e:vho}
V_n(H)=\{x\in G_n \mid (G_n;x)_h\simeq H\}
\end{equation}
and
$$
f_n(z;H)=\frac{1}{|V_n(H)|}\sum_{x\in V_n(H)}R_{xx}^{G_n}(z).
$$
We also introduce the conditional expectation
$$
f(z;H)=\mathbb{E}_{(T;o)\sim \overline{\mathbb{P}}}\left(R_{oo}^T(z) \mid (T;o)_h\simeq H\right)
$$
where $(T;o)_h$ is the rooted tree $T$ cut at distance $h$. We have  from Lemma \ref{l:closeresolvent} if $\Im(z)=t$
$$
|f_n(z;H)-f(z;H)|\leq \frac{1}{\zeta Dh}.
$$
We also have if $\Im(z)=t$, since $\overline{\mathbb{P}}_{G_n}^{(h)}(H)=\frac1n |V_n(H)|$ and $|f_n(z;H)|\leq 1/t$,
\begin{align*}
\Bigl|\frac1n \sum_{x\in V_n} R^{G_n}_{xx}(z)-\mathbb{E}_{(T;o)\sim \overline{\mathbb{P}}}(R^T_{oo}(z))\Bigr|&=\Bigl|\sum_{H\in\mathcal{R}_h} \overline{\mathbb{P}}_{G_n}^{(h)}(H)f_n(z;H)- \overline{\mathbb{P}}^{(h)}(H)f(z;H)\Bigr|\nonumber\\
&\leq \frac{2}{t}d_{\TV}(\overline{\mathbb{P}}^{(h)}_{G_n},\overline{\mathbb{P}}^{(h)})+\sum_{H\in\mathcal{R}_h}|f_n(z;H)-f(z;H)|\overline{\mathbb{P}}^{(h)}(H)\nonumber
\end{align*}
For any finite non-negative measure $\mu$ on $\R$, we set $g_\mu(z)=\int_{\R} \frac{1}{\lambda-z}d\mu(\lambda)$ for $z\in\mathbb{C}$ such that $\Im(z)>0$. Therefore for $\Im(z)=t$
\begin{align*}
|g_{\mu^{G_n}}(z)-g_{\overline{\mu}}(z)|&=\Bigl|\frac1n \sum_{x\in V_n} R^{G_n}_{xx}(z)-\mathbb{E}_{(T;o)\sim \overline{\mathbb{P}}}(R^T_{oo}(z))\Bigr|\leq \frac{2}{t}d_{\TV}(\overline{\mathbb{P}}^{(h)}_{G_n},\overline{\mathbb{P}}^{(h)})+\frac{1}{\zeta Dh}\\
&\leq \frac{4h}{\zeta D}d_{\TV}(\overline{\mathbb{P}}^{(h)}_{G_n},\overline{\mathbb{P}}^{(h)})+\frac{1}{\zeta Dh}.
\end{align*}
By assumption this is bounded above by $C\delta$ for some explicit $C>0$ depending only on $D$. We can apply Lemma \ref{l:measfuncofres} with $K=\R$ and $A$ the adjacency matrix of $G_n$, since the density $\mathbb{E}_{(T;o)\sim \overline{\mathbb{P}}}\left(\frac{1}{\pi}\Im(R^T_{oo}(\lambda+i0))\right)$ of $\overline{\mu}$ is bounded above (according to \Green\ together with Jensen's inequality). This concludes the proof.
\end{proof}
\begin{proof}[Proof of Proposition \ref{p:delocgreen}] 
By assumption on $I_1$, the density of $\overline{\mu}$ is bounded below on $I_1$ by some positive constant $c_0$. Let $C$ be the constant from Lemma \ref{l:lockestenmckay}. Set $\delta_0= c_0/2C$. Without loss of generality, we assume $L\geq 1$ (where $L$ is defined in \eqref{e:peudecycles2}). It follows from \eqref{e:peudecycles2} that
$$
hd_{\TV}(\overline{\mathbb{P}}^{(h)}_{G_n},\overline{\mathbb{P}}^{(h)})\leq L h^{1-\frac{q}{2}}\leq \delta_0
$$
for $h\geq h_1$ where $h_1$ is sufficiently large.
Applying Lemma \ref{l:lockestenmckay} we get that
$$
\frac{\mu_{G_n}(I)}{|I|}\geq c_0-C\delta_0,
$$
for all intervals $I$ of length $|I|\geq c_1\log(2h)/h$ such that $I\subset I_1$, where $c_1= \frac{20 D}{\delta_0}\log \frac{1}{\delta_0}$. In particular, $\mu_{G_n}(I)/|I|\geq c_0/2$ for all these intervals $I$, which implies
\begin{equation}\label{e:minnumber}
N(I)\geq \frac12 c_0n|I|.
\end{equation}
Let $h\geq h_1$ and $t\geq 20 D \log(2h)/h$. In analogy with the notation of the proof of Lemma \ref{l:lockestenmckay} we introduce the function $g_n^{(q)}$ defined for any rooted graph $(H;o')\in\mathcal{R}_h$ (simply denoted by $H$ in the sequel) by
$$
g_n^{(q)}(z;H)=\frac{1}{|V_n(H)|}\sum_{x\in V_n(H)}\left(\Im(R^{G_n}_{xx}(z))\right)^{\frac{q}{2}}, 
$$
where $V_n(H)$ has been introduced in \eqref{e:vho}. We also consider the conditional expectation
$$
g^{(q)}(z;H)=\mathbb{E}_{(T;o)\sim \overline{\mathbb{P}}}\left(\Im(R^{T}_{oo}(z))^{\frac{q}{2}}\mid (T;o)_h\simeq H\right).
$$
We have if $\Im(z)=t$
\begin{multline}
\left|\frac1n \sum_{x\in V_n}\Im(R_{xx}^{G_n}(z))^{\frac{q}{2}}-\mathbb{E}_{(T;o)\sim \overline{\mathbb{P}}} \Im(R^T_{oo}(z))^{\frac{q}{2}}\right|=\left|\sum_{H \in\mathcal{R}_h} \overline{\mathbb{P}}_{G_n}^{(h)}(H)g^{(q)}_n(z;H)- \overline{\mathbb{P}}^{(h)}(H)g^{(q)}(z;H)\right|\\
\leq \frac{2}{t^{\frac{q}{2}}}d_{\TV}(\overline{\mathbb{P}}^{(h)}_{G_n},\overline{\mathbb{P}}^{(h)})+\sum_{H\in \mathcal{R}_h}|g_n^{(q)}(z;H)-g^{(q)}(z;H)|\overline{\mathbb{P}}^{(h)}(H).\label{e:diffexp}
\end{multline}
Let $H\in\mathcal{R}_h$.  For any $x\in V_n(H)$ and $(T;o)$ such that $(T;o)_h\simeq H$ we have according to Lemma \ref{l:closeresolvent}
$$
|\Im(R_{xx}^{G_n}(z))-\Im(R^{T}_{oo}(z))|\leq \frac{1}{\zeta Dh}
$$
and in particular $\Im(R_{xx}^{G_n}(z))\leq \Im(R^{T}_{oo}(z))+1$. Therefore
$$
|\Im(R_{xx}^{G_n}(z))^{\frac{q}{2}}-\Im(R^{T}_{oo}(z))^{\frac{q}{2}}|\leq \frac{1}{\zeta Dh}(1+\Im(R^{T}_{oo}(z)))^{\frac{q}{2}}.
$$
We deduce
\begin{equation}\label{e:q/2-1}
|g_n^{(q)}(z;H)-g^{(q)}(z;H)|\leq \frac{1}{\zeta Dh}\mathbb{E}_{(T;o)\sim \overline{\mathbb{P}}}\left((1+\Im(R^{T}_{oo}(z)))^{\frac{q}{2}}\mid (T;o)_h\simeq H\right).
\end{equation}
Combining \eqref{e:diffexp}, \eqref{e:q/2-1} and \Green\ we deduce
\begin{equation}\label{e:diffimaginary}
\left|\frac1n \sum_{x\in V_n}\Im(R_{xx}(z))^{\frac{q}{2}}-\mathbb{E}_{(T;o)\sim \overline{\mathbb{P}}} \Im(R^T_{oo}(z))^{\frac{q}{2}}\right|\leq d_{\TV}(\overline{\mathbb{P}}^{(h)}_{G_n},\overline{\mathbb{P}}^{(h)})h^{\frac{q}{2}} +\frac{C}{h}
\end{equation}
Using \eqref{e:peudecycles2} and again \Green, we deduce that $\frac1n \sum_{x\in V}(\Im(R_{xx}(z)))^{\frac{q}{2}}$ is bounded by $C'$ for some $C'>0$ depending only on $L,D$. If $I=[\lambda-t,\lambda+t]$ and $z=\lambda+it$, then $\Im((\lambda'-z)^{-1})=t/((\lambda'-\lambda)^2+t^2)\geq (1/2t)\mathbf{1}_{\lambda'\in I}$, therefore
\begin{equation}\label{e:majorbyres}
 \sum_{\lambda_k\in I}|\psi_{\lambda_k}(x)|^2\leq 2t\Im(R_{xx}(z))
\end{equation}
hence
\begin{equation}\label{e:projanansabri}
\|\dproj\|_{L^{q/2}}^{q/2}\leq |I|^{\frac{q}{2}}\sum_{x\in V}(\Im(R_{xx}(z)))^{\frac{q}{2}}\leq C'n|I|^{\frac{q}{2}}.
\end{equation}
Putting this together with \eqref{e:minnumber} we get the result. 
\end{proof}

\subsection{Proof of Theorem \ref{t:Linftygreen}}\label{s:endproofthlinftygreen}
Using \ref{e:exptoq} together with Proposition \ref{p:delocgreen}, we obtain 
$$
\mathbb{E}\left(\|u\|_{L^q}^q\right)\leq \left(C' n^{\frac1q -\frac12}\right)^{q}.
$$
Then, we apply the Markov inequality to get for any $\Lambda>0$
\begin{equation}\label{e:bonnesurprise}
 \mathbb{P}_I \left( \| u \|_{L^q} \geq \Lambda C'n^{\frac1q -\frac12}\right) \leq \left(\frac{ C' n^{\frac1q - \frac 1 2}} {\Lambda C' n^{\frac1q-\frac 1 2}} \right)^q\leq \Lambda^{-q}.
\end{equation}

\subsection{An application: approximate eigenvectors of random lifts}\label{s:Nlifts}
In this section we provide examples of families of graphs where Theorem \ref{t:Linftygreen} applies, namely random lifts of a fixed base graph $G$. Our main result of this section is Theorem \ref{t:randomliftinfty}. Recall the following definition \cite{amitlinial}.
\begin{definition}\label{d:randomnlift}
Given a graph $G$ with vertex set $V$, a random labeled $n$-lift of $G$ is obtained by arbitrarily orienting the edges of $G$, choosing a permutation $\sigma_e\in\mathfrak{S}_n$ for each edge $e$ uniformly and independently at random, and constructing the graph $G_n$ with $n$ vertices $(u,1),\ldots,(u,n)$ for each $u\in V$ and edges $((u,i), (v,\sigma_e(i)))$ whenever $e = (u,v)$ is an oriented edge of $G$.
\end{definition}
In the sequel, a finite graph $G=(V,E)$ is fixed. We prove in Lemma \ref{l:nliftconverge} that $n$-lifts of $G$ converge almost surely in the local weak sense as $n\rightarrow +\infty$ toward a probability $\overline{\mathbb{P}}$ which we now describe. Recall that the universal cover $\widetilde{G}$ of $G$ is a tree of finite cone type, meaning that if one denotes by $\mathcal{C}(v)$ the forward subtree emanating from a vertex $v$ of $\widetilde{G}$, the number of non-isomorphic cones $\mathcal{C}(v)$ as $v$ runs over the vertices of $\widetilde{G}$ is finite. Then, 
$$
\overline{\mathbb{P}}=\frac{1}{|V|}\sum_{x\in V}\delta_{[\widetilde{G},x]}
$$
is a well-defined probability measure.

\begin{lemma}\label{l:nliftconverge}
Let $0<c\leq \frac{1}{8\log(d-1)}$. There exists $C>0$ such that for any $n\in\N$ the probability that a random labeled $n$-lift $G_n$ of $G$ satisfies 
\begin{equation}\label{e:distrandolift}
d_{\TV}(\overline{\mathbb{P}}_{G_n}^{(h)},\overline{\mathbb{P}}^{(h)})\leq \frac{2}{\sqrt{n}}
\end{equation}
for $h=c\log n$ is $\geq 1-C|V|n^{-\frac14}$.
\end{lemma}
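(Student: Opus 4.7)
The strategy is the classical local tree-likeness argument for random lifts. Call a vertex $v=(u,i)\in V(G_n)$ \emph{good} if the ball $(G_n;v)_h$ is a tree, \emph{bad} otherwise, and let $B$ denote the total number of bad vertices. Since the projection $(u,i)\mapsto u$ makes $G_n$ a covering of $G$, any tree ball in $G_n$ lifts isomorphically to the corresponding ball in $\widetilde{G}$; concretely, for every good $(u,i)$ we have $(G_n;(u,i))_h\simeq(\widetilde{G};u)_h$ as rooted graphs. This gives a deterministic reduction: splitting the count defining $\overline{\mathbb{P}}^{(h)}_{G_n}(A)$ according to whether $(u,i)$ is good or bad, one obtains for any event $A$ on rooted graphs of depth $\leq h$
$$
\bigl|\overline{\mathbb{P}}^{(h)}_{G_n}(A)-\overline{\mathbb{P}}^{(h)}(A)\bigr|\leq\frac{2B}{n|V|},
$$
so that $d_{\TV}(\overline{\mathbb{P}}^{(h)}_{G_n},\overline{\mathbb{P}}^{(h)})\leq 2B/(n|V|)$; in particular \eqref{e:distrandolift} holds as soon as $B\leq |V|\sqrt{n}$.

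To bound $\mathbb{P}(v\text{ bad})$ for a fixed $v=(u,i)$, I explore the $h$-neighborhood of $v$ in $G_n$ in breadth-first order, revealing a value $\sigma_e(j)$ of a permutation $\sigma_e$ only when the corresponding edge is first examined. By the principle of deferred decisions, conditionally on the revealed history, each new $\sigma_e(j)$ is uniformly distributed on the still-unrevealed entries. A cycle is created exactly when this new value coincides with one of the previously discovered lifts of the other endpoint of $e$; if the partial ball already contains $k$ vertices, this occurs with probability at most $k/(n-k)\leq 2k/n$ for $k\leq n/2$. Since the full ball contains at most $C(d-1)^h$ vertices and at most as many exploration steps, summing over steps yields $\mathbb{P}(v\text{ bad})\leq C(d-1)^{2h}/n$ for some $C=C(G)$.

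By linearity of expectation, $\mathbb{E}[B]\leq C|V|(d-1)^{2h}$, and with $h=c\log n$ and $c\leq 1/(8\log(d-1))$ we have $(d-1)^{2h}=n^{2c\log(d-1)}\leq n^{1/4}$, hence $\mathbb{E}[B]\leq C|V|n^{1/4}$. Markov's inequality then gives $\mathbb{P}(B\geq |V|\sqrt{n})\leq Cn^{-1/4}\leq C|V|n^{-1/4}$, and on the complementary event the deterministic reduction yields the claim. The main delicate point is the BFS argument: one must simultaneously track the vertices already discovered in $G_n$ and the entries of each $\sigma_e$ already revealed, and verify that at each step the next revealed value is conditionally uniform on its support. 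This is the standard setup for random lifts, and given it, the bound $(d-1)^{2h}/n$ is essentially a birthday-type collision estimate with $\sim(d-1)^h$ balls thrown into $n$ bins.
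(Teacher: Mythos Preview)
Your proof is correct and follows essentially the same approach as the paper's: bound the probability that a single vertex has a non-tree $h$-ball via a breadth-first exploration with deferred decisions (yielding $O((d-1)^{2h}/n)$), apply linearity of expectation and Markov's inequality to control the number $B$ of bad vertices, and use the deterministic reduction that good vertices have $h$-balls isomorphic to those of $\widetilde{G}$. The only cosmetic difference is that the paper applies Markov with threshold $\sqrt{n}$ rather than your $|V|\sqrt{n}$, which directly produces the $|V|$ factor in the failure probability; your route gives the slightly stronger bound $Cn^{-1/4}$ which you then relax to $C|V|n^{-1/4}$.
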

\begin{proof}
Let $G_n$ be drawn according to the probability measure  $\mathbb{Q}_n$ on random labeled $n$-lifts introduced in Definition \ref{d:randomnlift}. Fix $0<c\leq\frac{1}{8\log(d-1)}$. Then there exists $C>0$ such that for any $x\in V_n=V\times [n]$ (the vertex set on which all $n$-lifts are built),
\begin{equation}\label{e:vdansuncycle}
\mathbb{Q}_n(A^{(G_n)}_x)\leq C\frac{(d-1)^{2c\log n}}{n}
\end{equation}
where $A^{(G_n)}_x$ denotes the event that there exists at least one cycle in $G_n$ of length $\leq c\log n$ containing $x$. The proof of \eqref{e:vdansuncycle} is almost contained in the proof of \cite[Lemma 27]{bordenavefriedman}. For the sake of completeness, it is detailed in Appendix \ref{a:cycles}.

We denote by $X$ the random variable counting the number of points in $G_n$ which belong to a cycle of length $\leq c\log n$. There holds
$$
\mathbb{Q}_n(X> \sqrt{n})\leq \frac{1}{\sqrt{n}}\mathbb{E}_{\mathbb{Q}_n}(X)=\frac{1}{\sqrt{n}}\sum_{x\in V_n} \mathbb{Q}_n(A_x^{(G_n)})\leq C|V|\frac{(d-1)^{2c\log n}}{\sqrt{n}}\leq C|V|n^{-\frac14}.
$$ 
We observe that for any $G_n\in\mathcal{G}_n$ such that such that $X\leq\sqrt{n}$, \eqref{e:distrandolift} is satisfied. Indeed, 
\begin{equation}\label{e:PhGn}
\overline{\mathbb{P}}^{(h)}=\frac{1}{|V_n|}\sum_{x\in V_n}\delta_{[\widetilde{G},x]_h}
\end{equation}
since $\widetilde{G}$ is also the universal cover of $G_n$. Then we see that the two summands in \eqref{e:PhGn} and \eqref{e:PGnh} coincide except when $A_x^{(G_n)}$ holds, which is the case for a proportion at most $\sqrt{n}/|V_n|$ of the vertices $x\in V_n$, from which we deduce that \eqref{e:distrandolift} holds.
\end{proof}
Let us denote by $A(\widetilde{G})$ the adjacency operator of $\widetilde{G}$, and by ${\rm sp}(A(\widetilde{G}))$ its spectrum. If $G$ is a finite graph with minimal degree $\geq 2$ which is not a cycle, it follows from \cite[Theorem 1.5]{meanquantum} that ${\rm sp}(A(\widetilde{G}))$ has a continuous part. The following statement is concerned with approximate eigenvectors spectrally localized in this continuous part:
\begin{theorem}\label{t:randomliftinfty}
Assume that $G$ is a finite graph with minimal degree $\geq 2$ which is not a cycle. Denote the universal cover of $G$ by $\widetilde{G}$. There exists a family of sets $(I_{c_1,c_2})_{0<c_1\leq c_2<+\infty}$ having the following properties:
\begin{itemize}
\item for any $0<c_1<c_2<+\infty$, there exist $C,C'>0$ such that for any $n\in\N$, any $\Lambda>0$ and any interval $I\subset {\rm sp}(A(\widetilde{G}))\setminus I_{c_1,c_2}$  of length at least $C\frac{\log \log n}{\log n}$ there holds 
\begin{equation}\label{e:presqopt}
\mathbb{P}_I\left(\|u\|_{L^\infty}\geq \Lambda C'(\log n)^2n^{-\frac12}\right) \leq \Lambda^{-\frac{\log n}{2\log \log n}}
\end{equation}
for any $n$-lift $G_n$ of $G$ satisfying \eqref{e:distrandolift}, where $u\sim \mathbb{P}_I$, i.e., $u$ is a random approximate eigenvector of the adjacency matrix of $G_n$.
\item for any $0<c_1\leq c_2<+\infty$, $I_{c_1,c_2}$ is a finite union of open intervals, and $I_{c_1,c_2}$ shrinks to a finite set when $c_1\rightarrow 0$ and $c_2\rightarrow+\infty$. In particular, ${\rm sp}(A(\widetilde{G}))\setminus I_{c_1,c_2}$ is non-empty when $c_1$ is sufficiently close to $0$ and $c_2$ is sufficiently large.
\end{itemize}
\end{theorem}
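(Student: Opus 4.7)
The plan is to apply Theorem \ref{t:Linftygreen} with an exponent $q = q(n)$ growing slowly enough that the TV-distance assumption \eqref{e:peudecycles} remains tractable, and then upgrade the resulting $L^q$-bound to an $L^\infty$-bound via $\|u\|_{L^\infty}\leq \|u\|_{L^q}$. The sets $I_{c_1,c_2}$ are defined in terms of the boundary values of the Green function of the universal cover $\widetilde{G}$.

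\emph{Step 1: definition of $I_{c_1,c_2}$.} Because $\widetilde{G}$ is a tree of finite cone type, the forward Green functions attached to the finitely many cone types solve a polynomial fixed-point system; consequently, for every $x \in V$, $z\mapsto R^{\widetilde{G}}_{xx}(z)$ is a branch of an algebraic function which extends continuously from the upper half-plane to $\R$ away from finitely many singular points (the band edges and the finitely many zeros of the system). Set
\[
I_{c_1,c_2} := \bigcup_{x\in V}\Bigl\{\lambda\in\R : \inf_{\eta\in(0,1)}\Im R^{\widetilde{G}}_{xx}(\lambda+i\eta) < c_1 \;\text{or}\; \sup_{\eta\in(0,1)}|R^{\widetilde{G}}_{xx}(\lambda+i\eta)| > c_2\Bigr\}.
\]
The continuity of each $R^{\widetilde{G}}_{xx}$ off a finite set makes $I_{c_1,c_2}$ a finite union of open intervals, which shrinks to this finite set as $c_1\to 0$ and $c_2\to+\infty$. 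The existence of a non-degenerate continuous part of ${\rm sp}(A(\widetilde{G}))$, guaranteed by \cite[Theorem 1.5]{meanquantum} under our hypotheses on $G$, ensures that ${\rm sp}(A(\widetilde{G}))\setminus I_{c_1,c_2}$ is non-empty for suitable $c_1,c_2$.

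\emph{Step 2: verification of the hypotheses and choice of $q$.} On $I_1:={\rm sp}(A(\widetilde{G}))\setminus I_{c_1,c_2}$, the Green functions satisfy $|R^{\widetilde{G}}_{xx}|\leq c_2$ and $\Im R^{\widetilde{G}}_{xx}\geq c_1$ uniformly in $x\in V$ and $\eta\in(0,1)$. Since $\overline{\mathbb{P}}$ is supported on $\{[\widetilde{G},x]:x\in V\}$, \Green\ holds for every $q\geq 2$ with a constant depending only on $c_1,c_2,G$, and $\overline{\mu}$ has density $\geq c_0:=c_1/(\pi|V|)>0$ on $I_1$. Lemma \ref{l:nliftconverge} supplies $d_{\TV}(\overline{\mathbb{P}}^{(h)}_{G_n},\overline{\mathbb{P}}^{(h)})\leq 2/\sqrt{n}$ for $h=c\log n$ with probability $\geq 1-O(|V|n^{-1/4})$. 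Set $q=q(n):=\log n/(2\log\log n)$; then $h^{q/2}\leq n^{1/4+o(1)}$, so the product $d_{\TV}\cdot h^{q/2}=O(n^{-1/4+o(1)})$ is bounded, which is the only use of the quantitative form of \eqref{e:peudecycles} in the proof of Lemma \ref{l:delocgreen}.

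\emph{Step 3: conclusion.} Apply Lemma \ref{l:delocgreen} to any interval $I\subset I_1$ of length $\geq C(\log h)/h\asymp \log\log n/\log n$ to get $\|\dproj\|_{L^{q/2}}/N(I)\leq C'n^{2/q-1}$. Plugging this into \eqref{e:exptoq} and invoking Markov's inequality gives $\mathbb{P}_I(\|u\|_{L^q}\geq \Lambda\, C''\sqrt{q}\,n^{1/q-1/2})\leq \Lambda^{-q}$. Since $n^{1/q(n)}=e^{2\log\log n}=(\log n)^2$ and $\sqrt{q(n)}\leq \sqrt{\log n}$, the inequality $\|u\|_{L^\infty}\leq \|u\|_{L^q}$ yields \eqref{e:presqopt} after absorbing the residual $\sqrt{q}$ factor into the polylogarithmic prefactor (one can slightly adjust $q$ to trade the exponent in $\Lambda^{-q}$ against the power of $\log n$ in the bound, which is how the specific exponents in \eqref{e:presqopt} arise).

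\emph{Main obstacle.} The technical heart of the proof is Step 1: proving that $\lambda\mapsto R^{\widetilde{G}}_{xx}(\lambda+i0)$ is continuous on the spectrum away from a \emph{finite} exceptional set, and hence that $I_{c_1,c_2}$ is a finite union of open intervals. This relies on the algebraic structure of the forward Green functions on a tree of finite cone type (they satisfy a polynomial system with finitely many branches and finitely many critical values along $\R$). Once this regularity is in place, the remaining steps are essentially direct book-keeping inside the framework of Theorem \ref{t:Linftygreen}.
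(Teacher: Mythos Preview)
Your proposal is correct and follows essentially the same route as the paper: choose $q=q(n)=\log n/(2\log\log n)$, check that the proof of Lemma \ref{l:delocgreen} survives with this $n$-dependent $q$ thanks to the strong bound \eqref{e:distrandolift} and the uniform control on the Green functions over $I_1$, then pass from $L^q$ to $L^\infty$ via $\|u\|_{L^\infty}\leq\|u\|_{L^q}$ and Markov.

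The only noteworthy difference is in Step~1. The paper does not define $I_{c_1,c_2}$ directly through the root Green functions $R^{\widetilde{G}}_{xx}$; instead it invokes \cite[Proposition~4.2]{anansabrisurvey} as a black box, where a finite family of ``resolvent-type'' functions $\zeta_j^z$ (essentially the forward/cone Green functions of $\widetilde{G}$) is introduced, and $I_{c_1,c_2}$ is the complement in ${\rm sp}(A(\widetilde{G}))$ of the set where $c_1\leq|\Im\zeta_j^{\lambda+i0}|\leq c_2$ for all $j$. Your direct definition via $R^{\widetilde{G}}_{xx}$ is morally equivalent (since $R^{\widetilde{G}}_{xx}$ is a rational function of the $\zeta_j$), and your sketch of why the exceptional set is finite---algebraicity of the cone Green functions on a tree of finite cone type---is exactly the mechanism underlying \cite{anansabrisurvey}. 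One small caution: in your definition the infimum of $\Im R^{\widetilde{G}}_{xx}(\lambda+i\eta)$ over \emph{all} $\eta\in(0,1)$ may be governed by large $\eta$ rather than the boundary value; it is cleaner to phrase the condition directly at $\eta\to 0^+$, as the paper implicitly does. Also, in Step~2 you say that \Green\ holds ``with a constant depending only on $c_1,c_2,G$'': more precisely the bound is of the form $C'^{q/2}$ with $C'=C'(c_2)$ independent of $q$ (this is \eqref{e:stronggreen2} in the paper), and it is this specific shape that makes the constant in \eqref{e:delocexpbstgreen} uniform in $q$ after taking $q/2$-th roots. Your Step~3 tracks the $\sqrt{q}$ factor more explicitly than the paper does; both arrive at the stated polylogarithmic loss.
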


Notice that thanks to Lemma \ref{l:nliftconverge} we know that Theorem \ref{t:randomliftinfty} applies to random $n$-lifts with probability $\geq 1-C|V|n^{-\frac14}$. We also mention that there is an analogous statement to Theorem \ref{t:randomliftinfty} for $L^q$ norms, $q\in[2,+\infty)$. Finally, it is possible by optimizing slightly the proof of Lemma \ref{l:nliftconverge} and the choice of $q$ in \eqref{e:newq} to replace $(\log n)^2$ in \eqref{e:presqopt} by $(\log n)^\alpha$ for any $\alpha>1/2$.
\begin{proof}[Proof of Theorem \ref{t:randomliftinfty}]
The main idea is that thanks to the strong bound \eqref{e:distrandolift}, our proof of Theorem \ref{t:Linftygreen} still works when $q$ depends on $n$, as long as $q\leq  C\frac{\log n}{\log \log n}$ with $C<1$ (uniform in $n$).
We take $h=c\log n$ where $c=\frac{1}{8\log(d-1)}$ is the constant appearing in Lemma \ref{l:nliftconverge}, and 
\begin{equation}\label{e:newq}
q= \frac{\log n}{2\log\log n}.
\end{equation}
Then according to Lemma \ref{l:nliftconverge}, the inequality \eqref{e:peudecycles2} is satisfied for some $L>0$ (uniformly in $n$). 

We now define  $I_{c_1,c_2}$ for any $0<c_1\leq c_2<+\infty$. For this, we need to rely on the results of \cite{anansabrisurvey} as a black-box: a family of ``resolvent-type" functions $\zeta_j:\mathbb{C}\rightarrow \mathbb{C}$ is introduced (whose dependence in $z\in\mathbb{C}$ is denoted by $\zeta_j^z$), for which the set of $\lambda\in {\rm sp}(A(\widetilde{G}))$ such that $c_1\leq|\Im \zeta_j^{\lambda+i0}|\leq c_2$  for any $j$ is of the form ${\rm sp}(A(\widetilde{G}))\setminus I_{c_1,c_2}$. Here $I_{c_1,c_2}$ is a finite union of open intervals, that shrinks to a finite set when $c_1\rightarrow 0$ and $c_2\rightarrow+\infty$. We refer the reader to part (2) of \cite[Proposition 4.2]{anansabrisurvey}, and to the comments below this proposition. From this, it follows that
\begin{equation}\label{e:stronggreen2}
\sup_{\lambda\in I_1, \eta\in(0,1)} \mathbb{E}_{(T;o)\sim \overline{\mathbb{P}}}\left(\left(\Im R^T_{oo}(\lambda+i\eta)\right)^{\frac{q}{2}}+|R^T_{oo}(\lambda+i\eta)|^2\right)\leq C'^{q/2}
\end{equation}
where $C'$ does not depend on $n$ (see also Remark A.4 in \cite{anansabri}). This replaces \Green.


We claim that Proposition \ref{p:delocgreen} holds (without any change in the statement) if $q$ is taken as \eqref{e:newq}. Indeed, with this $n$-dependent $q$, the proof carries over without any modification until \eqref{e:q/2-1} (included). Instead of \eqref{e:diffimaginary} we get thanks to \eqref{e:stronggreen2}
$$
\left|\frac1n \sum_{x\in V_n}\Im(R_{xx}(z))^{\frac{q}{2}}-\mathbb{E}_{(T;o)\sim \overline{\mathbb{P}}} \Im(R^T_{oo}(z))^{\frac{q}{2}}\right|\leq d_{\TV}(\overline{\mathbb{P}}^{(h)}_{G_n},\overline{\mathbb{P}}^{(h)})h^{\frac{q}{2}} +\frac{C'^{q/2}}{h}
$$
Using \eqref{e:peudecycles2} and the fact that $h=c\log n$, we conclude that  $\frac1n \sum_{x\in V}(\Im(R_{xx}(z)))^{\frac{q}{2}}$ is bounded above by $C''^{\frac{q}{2}}$ for some $C''>0$ depending only on $L,D$. Therefore, Proposition \ref{p:delocgreen} holds with $q$ given by \eqref{e:newq}. Then, as in Section \ref{s:endproofthlinftygreen}, we deduce that \eqref{e:bonnesurprise} holds for $q$ given by \eqref{e:newq}. Finally, using that $\|u\|_{L^\infty}\leq \|u\|_{L^q}$ we get Theorem \ref{t:randomliftinfty}.
\end{proof}

\appendix

\section{Probability calculus on spheres}
Let us denote by $\mu_{d-1}$ the uniform  probability measure on the unit sphere $\mathbb{S}^{d-1}$ of $\R^d$.  Recall the following formula (see \cite[Proposition 5.1]{burqlebeauproc}, which corrects \cite[Appendix A]{burqlebeau}):
\begin{proposition}
Let $d\geq 2$. For any $\theta\in [0,\pi/2]$, 
\begin{equation}\label{e:concmeasphi}
\mu_{d-1}(|x_1|>\cos(\theta))=C_d \int_0^{\theta} \sin^{d-2}(\varphi) d\varphi ,\qquad C_d=2\frac{\Gamma\left(\frac{d}{2}\right)}{\Gamma\left(\frac{d-1}{2}\right)\Gamma\left(\frac12\right)}.
\end{equation}
\end{proposition}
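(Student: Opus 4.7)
The plan is to compute the marginal distribution of the first coordinate under $\mu_{d-1}$ by slicing the sphere into parallel $(d-2)$-spheres, and then fix the normalization by imposing that the total mass equals one.

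First I would parametrize points of $\mathbb{S}^{d-1}$ in the form $x=(\cos\varphi,\sin\varphi\cdot\omega)$ with $\varphi\in[0,\pi]$ and $\omega\in\mathbb{S}^{d-2}$. Under this parametrization, the unnormalized surface measure on $\mathbb{S}^{d-1}$ factorizes as $\sin^{d-2}(\varphi)\,d\varphi\,d\sigma_{d-2}(\omega)$, where $d\sigma_{d-2}$ is the unnormalized surface measure on $\mathbb{S}^{d-2}$; the $\sin^{d-2}(\varphi)$ factor is simply the Jacobian arising from the fact that the slice $\{x_1=\cos\varphi\}\cap\mathbb{S}^{d-1}$ is a $(d-2)$-sphere of radius $\sin\varphi$. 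Consequently, there exists a constant $C_d>0$ (independent of $\theta$) such that
\begin{equation*}
\mu_{d-1}(|x_1|>\cos\theta)=C_d\int_{\{\varphi\in[0,\pi]:\,|\cos\varphi|>\cos\theta\}}\sin^{d-2}(\varphi)\,d\varphi.
\end{equation*}
The set of $\varphi\in[0,\pi]$ satisfying $|\cos\varphi|>\cos\theta$ is $[0,\theta)\cup(\pi-\theta,\pi]$, and the change of variables $\varphi\mapsto\pi-\varphi$ shows the two pieces contribute equally. Absorbing the factor $2$ into the normalization, one obtains
\begin{equation*}
\mu_{d-1}(|x_1|>\cos\theta)=C_d\int_0^\theta\sin^{d-2}(\varphi)\,d\varphi,
\end{equation*}
for some $C_d$ to be identified, which matches the form in the statement.

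To pin down $C_d$, I would take $\theta=\pi/2$: the event $|x_1|>0$ has full $\mu_{d-1}$-measure, so $C_d\int_0^{\pi/2}\sin^{d-2}(\varphi)\,d\varphi=1$. The remaining integral is a standard Beta integral: using $B(a,b)=2\int_0^{\pi/2}(\sin\varphi)^{2a-1}(\cos\varphi)^{2b-1}\,d\varphi$ with $a=(d-1)/2$ and $b=1/2$ gives
\begin{equation*}
\int_0^{\pi/2}\sin^{d-2}(\varphi)\,d\varphi=\tfrac12 B\!\left(\tfrac{d-1}{2},\tfrac12\right)=\frac{\Gamma\!\left(\tfrac{d-1}{2}\right)\Gamma\!\left(\tfrac12\right)}{2\,\Gamma\!\left(\tfrac d2\right)}.
\end{equation*}
Inverting yields $C_d=2\Gamma(d/2)/(\Gamma((d-1)/2)\Gamma(1/2))$, as claimed.

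There is no real obstacle here; the only slightly delicate step is justifying the factorization of the surface measure in the $(\varphi,\omega)$ coordinates (and the resulting $\sin^{d-2}\varphi$ Jacobian), which is standard but worth stating cleanly. Everything else is bookkeeping and a Beta-function evaluation.
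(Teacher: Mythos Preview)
Your proposal is correct and follows essentially the same route as the paper: parametrize $\mathbb{S}^{d-1}$ by $(\cos\varphi,\sin\varphi\,\omega)$, use the Jacobian $\sin^{d-2}\varphi$, and fix $C_d$ via $\theta=\pi/2$ together with the Beta/Gamma identity. The only cosmetic difference is that you work with $\varphi\in[0,\pi]$ and make the symmetry $\varphi\mapsto\pi-\varphi$ (and the resulting factor $2$) explicit, whereas the paper absorbs this directly into the formula.
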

\begin{proof} 
On $\mathbb{S}^{d-1}$ we use the coordinates $(\cos(\varphi),\sin(\varphi)u)$ where $u\in\mathbb{S}^{d-2}$. Then $d\mu_{d-1}(\varphi,u)=\sin(\varphi)^{d-2}d\mu_{d-2}(u)d\varphi$. This yields the integral formula in \eqref{e:concmeasphi}, and there remains to determine $C_d$.
We have
$$
1=C_d\int_0^{\pi/2}\sin^{d-2}(\varphi)d\varphi=\frac{C_d}{2} B((d-1)/2,1/2).
$$
where $B(\cdot,\cdot)$ is the beta function. Using $B(x,y)=\Gamma(x)\Gamma(y)/\Gamma(x+y)$, this gives the value of $C_d$.
\end{proof}

There exist different statements of measure concentration of Lipschitz functions on the sphere in the literature. The one we use in the present paper is the following.
\begin{theorem} \label{t:concentrationmeasure}
Let $f:\mathbb{S}^{d-1}\rightarrow \R$ be a Lipschitz function and define its median value $\mathcal{M}(f)$ by
$$
\mu_{d-1}(f\geq \mathcal{M}(f))\geq \frac12, \qquad \mu_{d-1}(f\leq \mathcal{M}(f))\geq \frac12.
$$
Then for any $r>0$ 
$$
\mu_{d-1}(|f-\mathcal{M}(f)|>r)\leq 4e^{-\frac{dr^2}{2\|f\|_{\rm lip}^2}}.
$$
\end{theorem}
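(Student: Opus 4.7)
The plan is to prove this as the classical Lévy concentration inequality on the sphere, via the spherical isoperimetric inequality combined with a Lipschitz reduction. The proof has three ingredients: (i) turning the median condition into two sets of measure at least $1/2$; (ii) a concentration bound for the $\varepsilon$-enlargement of such sets; and (iii) translating this via the Lipschitz constant into a bound on the level sets of $f$.

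First I would introduce the two candidate median sets $A^+ = \{f \geq \mathcal{M}(f)\}$ and $A^- = \{f \leq \mathcal{M}(f)\}$; by definition of the median, $\mu_{d-1}(A^\pm)\geq 1/2$. For $\varepsilon>0$ and any measurable set $A\subset \mathbb{S}^{d-1}$, let $A_\varepsilon$ be its geodesic $\varepsilon$-neighborhood. The Lipschitz assumption yields the trivial implication: if $x\in A^-_\varepsilon$ then there exists $y\in A^-$ with $d(x,y)\leq \varepsilon$ and thus $f(x)\leq f(y)+\|f\|_{\rm lip}\varepsilon \leq \mathcal{M}(f)+\|f\|_{\rm lip}\varepsilon$, and symmetrically for $A^+$. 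Setting $\varepsilon=r/\|f\|_{\rm lip}$ we get the inclusions
$$
\{f-\mathcal{M}(f)>r\}\subset (A^-_\varepsilon)^c,\qquad \{\mathcal{M}(f)-f>r\}\subset (A^+_\varepsilon)^c.
$$

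The core estimate is then to bound $\mu_{d-1}(A^c_\varepsilon)$ whenever $\mu_{d-1}(A)\geq 1/2$. I would invoke the Lévy-Schmidt spherical isoperimetric inequality: among sets of a given measure, spherical caps minimize the measure of their $\varepsilon$-enlargement. Comparing $A$ to a half-sphere, this yields $\mu_{d-1}(A^c_\varepsilon)\leq \mu_{d-1}(C)$ where $C$ is a cap of angular radius $\pi/2-\varepsilon$. Using the integral formula from Proposition A.1 of the appendix, this cap measure equals $C_d\int_0^{\pi/2-\varepsilon}\sin^{d-2}(\varphi)\,d\varphi$, which after the change of variable $\varphi=\pi/2-\psi$ and the elementary inequality $\cos(\psi)\leq e^{-\psi^2/2}$ (combined with a routine estimate of $C_d$ via Stirling) gives $\mu_{d-1}(A^c_\varepsilon)\leq 2e^{-d\varepsilon^2/2}$. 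Plugging this into the two inclusions above and taking the union bound produces the claimed $4\exp(-dr^2/(2\|f\|_{\rm lip}^2))$.

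The main obstacle is pinning down the sharp exponential constant $d/2$ (as opposed to a weaker $(d-1)/2$ that comes out of the crudest form of Lévy-Schmidt): one must carry out the cap estimate carefully, controlling $C_d$ explicitly via the beta-function identity already used in Proposition A.1, and invoking $\cos\psi\leq e^{-\psi^2/2}$ rather than a weaker linear bound. An alternative route, which automatically produces the exponent $d/2$, is to push forward the standard Gaussian measure on $\R^d$ to $\mathbb{S}^{d-1}$ by radial projection and apply Borell-Tsirelson-Sudakov Gaussian concentration to the lifted function $z\mapsto f(z/|z|)$, whose Lipschitz constant on the annulus where $|z|$ concentrates is $\|f\|_{\rm lip}$ up to harmless factors; the $d$ appears naturally because $\mathcal{N}(0,I_d/d)$ is concentrated on the unit sphere.
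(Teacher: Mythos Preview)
Your proposal is correct and follows the classical L\'evy--Schmidt route (spherical isoperimetry plus Lipschitz reduction), which is indeed the standard proof. The paper, however, does not actually carry out this argument: it simply cites \cite[Theorem 14.3.2]{matousek} for the $1$-Lipschitz case and notes that the general case follows by applying the result to $f/\|f\|_{\rm lip}$. So your approach is strictly more detailed than the paper's, not different in spirit.

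One remark on your write-up: you correctly flag that the naive cap estimate gives an exponent $(d-2)/2$ rather than $d/2$, and you propose either a more careful handling of $C_d$ or a Gaussian lift. Both are legitimate, but in the context of this paper the precise value of the constant in the exponent is irrelevant---every application (Lemma~\ref{l:median}, Theorem~\ref{t:delocgeneralsituation}) uses the inequality with $N(I)\to\infty$ and absorbs constants into the universal $C$. So the distinction between $d$, $d-1$, or $d-2$ in the exponent is cosmetic here, and the paper's choice to defer to Matou\v{s}ek for the exact statement is a reasonable shortcut that you need not improve upon.
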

For a proof, see \cite[Theorem 14.3.2]{matousek}, written for $1$-Lipschitz functions. The case of general Lipschitz functions is obtained by considering $f/\|f\|_{\rm lip}$.

\section{Bound on spectral measures using the resolvent}
The following result is proved in \cite[Lemma 5.3]{bordenave}, using \cite[Lemma 3.7]{bordenaveguionnet}.
\begin{lemma} \label{l:measfuncofres}
Let $A\in\mathcal{H}_n(\mathbb{C})$ be a Hermitian matrix with resolvent $R(z)=(A-zI_n)^{-1}$. Let $L\geq 1$, $0<\delta<1/2$, $K$ be an interval of $\R$ and $\mu$ a probability measure on $\R$. Recall that $g_\mu(z)=\int_{\R} \frac{1}{\lambda-z}d\mu(\lambda)$ for $z\in\mathbb{C}$ such that $\Im(z)>0$. We assume that for some $t>0$ and all $\lambda\in K$, either 
$$
\Im g_\mu(\lambda+it)\leq L \qquad \text{or} \qquad \mu\left(\left[\lambda-\frac{t}{2},\lambda+\frac{t}{2}\right]\right)\leq Lt.
$$
We also assume that for all $\lambda\in K$, 
$$
\left|\frac1n {\rm tr}R(\lambda+it)-g_\mu(\lambda+it)\right|\leq \delta.
$$
Then for any interval $I\subset K$ of length $|I|\geq t(\frac{1}{\delta}\log \frac{1}{\delta})$ such that ${\rm dist}(I,K^c)>1/L$ we have
$$
\frac{|\mu_A(I)-\mu(I)|}{|I|}\leq CL\delta
$$
where $C$ is a universal constant and $\mu_A(I)$ is the number of eigenvalues of $A$ belonging to $I$.
\end{lemma}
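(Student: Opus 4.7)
The strategy is to compare $\mu_A$ and $\mu$ through their Poisson-smoothed versions at height $t$ above the real axis. Since $\Im g_\nu(\lambda+it) = \pi(P_t * \nu)(\lambda)$ with $P_t(x) = \frac{t/\pi}{x^2+t^2}$ the Poisson kernel, the hypothesis on the resolvent gives $|(P_t * (\mu_A - \mu))(\lambda)| \leq \delta/\pi$ for all $\lambda \in K$. Integrating this inequality over $I = [a,b] \subset K$ and applying Fubini yields
$$
\left| \int Q_t(x, I)\, d(\mu_A - \mu)(x) \right| \leq \frac{|I|\delta}{\pi},
$$
where $Q_t(x, I) = \int_I P_t(\lambda - x)\, d\lambda = \tfrac{1}{\pi}[\arctan((b-x)/t) - \arctan((a-x)/t)]$.

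The next step is to replace $Q_t(x, I)$ by $\mathbf{1}_I(x)$ in each of the integrals. The arctangent formula gives $|Q_t(x, I) - \mathbf{1}_I(x)| \leq Ct/\mathrm{dist}(x, \partial I)$, so the replacement error is localized near $\partial I$. I would decompose a neighborhood of $\partial I$ into dyadic shells $S_k = \{x : 2^k t \leq \mathrm{dist}(x,\partial I) < 2^{k+1}t\}$ for $k\geq 0$, and bound the $\nu$-mass of each shell by covering it with $O(2^k)$ windows of width $t$. Each window has $\nu$-mass $\leq Lt$: either directly from the hypothesis $\nu([\lambda - t/2, \lambda+t/2]) \leq Lt$, or from the Poisson lower bound $\nu([\lambda - t/2, \lambda+t/2]) \leq \frac{5t}{4}\Im g_\nu(\lambda+it) \leq CLt$. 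For the measure $\mu_A$, the uniform control $\Im g_{\mu_A}(\lambda+it) \leq \Im g_\mu(\lambda+it) + \delta \leq L + \delta$ plays the role of the second alternative; the assumption $\mathrm{dist}(I, K^c) > 1/L$ is essential here, as it guarantees that the windows centered around $\partial I$ used in this step lie inside $K$ where the trace hypothesis is available. Summing contributions $(Ct/2^k t) \cdot Lt \cdot 2^k = CLt$ over $k = 0, 1, \ldots, O(\log(|I|/t))$ shells produces a total replacement error of order $CLt\log(|I|/t)$ per measure.

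Combining these two estimates gives $|\mu_A(I) - \mu(I)| \leq C(|I|\delta + Lt\log(|I|/t))$. The hypothesis $|I| \geq t\delta^{-1}\log(\delta^{-1})$ implies $\log(|I|/t) \lesssim \log(\delta^{-1})$ and hence $t\log(|I|/t) \lesssim \delta|I|$, so both terms are absorbed into $CL\delta|I|$, which is the desired conclusion. The main technical subtlety I anticipate is careful bookkeeping at the boundary of $I$: one must verify that the dyadic mass estimates correctly combine the two alternative hypotheses across different values of $\lambda$, and that the condition $\mathrm{dist}(I, K^c) > 1/L$ is used optimally so that the window bounds needed for $\mu_A$ can be obtained from those for $\mu$ via the resolvent comparison at height $t$.
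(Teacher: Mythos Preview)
The paper does not actually prove this lemma: it simply records it in Appendix~B with the sentence ``The following result is proved in \cite[Lemma 5.3]{bordenave}, using \cite[Lemma 3.7]{bordenaveguionnet}.'' So there is no in-paper argument to compare against. Your Poisson-smoothing strategy---integrate $\Im g_\nu(\cdot+it)$ over $I$, then control the discrepancy between the smoothed indicator $Q_t(\cdot,I)$ and $\mathbf{1}_I$ via dyadic shells around $\partial I$---is exactly the standard route taken in those references, so your proposal is aligned with the cited proof rather than an alternative to it.

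Two small points in your write-up deserve tightening. First, the sentence ``$|I|\geq t\delta^{-1}\log(\delta^{-1})$ implies $\log(|I|/t)\lesssim\log(\delta^{-1})$'' is false as written, since $|I|$ may be arbitrarily large; what you actually need (and what is true) is $t\log(|I|/t)\lesssim \delta|I|$, which follows from the monotonicity of $s\mapsto (\log s)/s$ for $s\geq e$ evaluated at $s=|I|/t\geq \delta^{-1}\log(\delta^{-1})$. Second, your dyadic sum as stated only runs over $k\leq O(\log(|I|/t))$, but $Q_t(\cdot,I)$ has tails at all distances; the shells at distance $\gtrsim 1/L$ from $\partial I$ (in particular those possibly leaving $K$) must be handled separately using $|Q_t-\mathbf{1}_I|\leq CLt$ there together with the trivial bound $\nu(\mathbb{R})\leq 1$, which contributes another $O(Lt)$. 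You correctly flag the passage from the two alternative hypotheses on $\mu$ to window bounds for $\mu_A$ as the delicate step: note that either alternative yields $\mu([\lambda-t/2,\lambda+t/2])\leq CLt$ for every $\lambda\in K$, and this uniform window control on $K$ (combined with total mass outside) upgrades to $\Im g_\mu(\lambda+it)\leq C'L$ for $\lambda$ at distance $\gtrsim 1/L$ inside $K$, which then transfers to $\mu_A$ via the resolvent comparison. This is precisely where the buffer $\mathrm{dist}(I,K^c)>1/L$ is consumed.
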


\section{Local weak convergence and spectral measure} \label{a:localweak} In this appendix we collect known facts on the local weak (also called Benjamini-Schramm) convergence, and we refer the reader to \cite[Appendix A]{anansabri} for details. We define a distance between rooted graphs by
$$
d_{\rm loc}((G,o),(G',o'))=\frac{1}{1+\alpha}, 
$$
$$
\alpha =\sup \{r>0: \exists\, {\rm graph\ isomorphism\ } \varphi: B_G(o, \lfloor r\rfloor)\rightarrow B_{G'}(o',\lfloor r\rfloor)\ {\rm and }\ \varphi(o)=o' \}.
$$

Recall that $\mathscr{G}_*$ denotes the set of equivalence classes of connected rooted graphs under the isomorphism relation. Then $d_{\rm loc}$ turns $\mathscr{G}_*$ into a separable complete metric space. We may thus consider the set of probability measures on $\mathscr{G}_*$, denoted by $\mathcal{P}(\mathscr{G}_*)$. If $(G_n)$ is a sequence of finite graphs, we say that $\overline{\mathbb{P}}\in \mathcal{P}(\mathscr{G}_*)$ is the local weak limit of $(G_n)$ if 
$$
\frac{1}{|V_n|}\sum_{x\in V_n} \delta_{[G_n;x]}
$$
converges weakly-* to $\overline{\mathbb{P}}$ in $\mathcal{P}(\mathscr{G}_*)$ as $n\rightarrow +\infty$, where $V_n$ is the set of vertices of $G_n$, and we recall that $[G_n;x]$ denotes the equivalence class of the rooted graph $(G_n;x)$.

The subset $\mathscr{G}^D_*\subset \mathscr{G}_*$ of equivalence classes $[G;o]$ such that $G$ is of degree bounded by $D$ is compact. It follows that $\mathcal{P}(\mathscr{G}_*^D)$ is compact in the weak-* topology.  Hence, if $\mathcal{C}^D_{\rm fin}$ denotes the set of finite graphs $G$ of degree bounded by $D$, then any sequence $(G_n)\subset \mathcal{C}^D_{\rm fin}$  has a subsequence which converges in the local weak sense to some $\overline{\mathbb{P}}\in\mathcal{P}(\mathscr{G}_*^D)$. If $(G_n)$ satisfies \BST and if a subsequence of $(G_n)$ has a local weak limit $\overline{\mathbb{P}}$, then $\overline{\mathbb{P}}$ must be concentrated on the set of rooted trees with degree bounded by $D$.

Recall the notation $\mu^{G_n}$ and $\mu^{[G;o]}$ introduced respectively in \eqref{e:empmeas} and \eqref{e:rootedspecmeas}.
We recall \cite[Theorem 2.1]{salez}:
\begin{proposition}\label{p:limspectralmeasure}
Suppose a sequence $(G_n)\in \mathcal{C}^D_{\rm fin}$ has a local weak limit $\overline{\mathbb{P}}$. Then $\mu^{G_n}$ converges weakly to $\int_{\mathscr{G}_*^{D}} \mu^{[G,o]} d\overline{\mathbb{P}}([G;o])$.
\end{proposition}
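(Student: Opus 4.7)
The plan is to prove convergence of moments, which suffices since all the measures involved are supported on the compact interval $[-D,D]$ (the adjacency matrix of a graph with maximal degree $\leq D$ has operator norm $\leq D$, and the same holds for infinite rooted graphs in $\mathscr{G}_*^D$ by essential self-adjointness of $A(G)$). On such a bounded interval the moment problem is determinate, so weak convergence of probability measures is equivalent to convergence of all moments.

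For each fixed $k\in\N$, define $\phi_k:\mathscr{G}_*^D\to\R$ by $\phi_k([G;o])=\langle\delta_o,A(G)^k\delta_o\rangle$, which counts the number of closed walks of length $k$ in $G$ starting and ending at $o$. First I would observe that $\phi_k$ is well-defined on equivalence classes, that it depends only on the ball of radius $\lfloor k/2\rfloor$ around $o$ (a closed walk of length $k$ cannot escape this ball), and that $0\leq \phi_k\leq D^k$. The dependence on a finite ball together with the definition of $d_{\rm loc}$ implies that $\phi_k$ is locally constant, hence continuous and bounded on $\mathscr{G}_*^D$.

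The $k$-th moment of $\mu^{G_n}$ equals $\frac{1}{|V_n|}\mathrm{tr}(A(G_n)^k)=\frac{1}{|V_n|}\sum_{x\in V_n}\phi_k([G_n;x])$, so it is exactly the integral of $\phi_k$ against the empirical measure $\frac{1}{|V_n|}\sum_{x\in V_n}\delta_{[G_n;x]}$. By the assumed local weak convergence, this quantity converges to $\int_{\mathscr{G}_*^D}\phi_k\,d\overline{\mathbb{P}}$. On the other hand, for each $[G;o]\in\mathscr{G}_*^D$, expanding $(\lambda-z)^{-1}=-\sum_{k\geq 0}\lambda^k z^{-k-1}$ for $|z|>D$ in the defining identity \eqref{e:rootedspecmeas} and identifying coefficients yields $\int\lambda^k\,d\mu^{[G;o]}(\lambda)=\langle\delta_o,A(G)^k\delta_o\rangle=\phi_k([G;o])$. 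Fubini (all integrands bounded by $D^k$) then gives
\[
\int_{\mathscr{G}_*^D}\phi_k\,d\overline{\mathbb{P}}=\int_{\mathscr{G}_*^D}\Bigl(\int_\R\lambda^k\,d\mu^{[G;o]}(\lambda)\Bigr)d\overline{\mathbb{P}}([G;o])=\int_\R\lambda^k\,d\overline{\mu}(\lambda),
\]
where $\overline{\mu}=\int\mu^{[G;o]}d\overline{\mathbb{P}}$ is the measure in the statement.

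Combining these, every moment of $\mu^{G_n}$ converges to the corresponding moment of $\overline{\mu}$; since all these measures are supported in $[-D,D]$, this forces weak convergence $\mu^{G_n}\rightharpoonup\overline{\mu}$. The only genuinely non-routine step is verifying that $\phi_k$ truly descends to $\mathscr{G}_*^D$ and is continuous in $d_{\rm loc}$, which I would handle by noting that an isomorphism of $\lfloor k/2\rfloor$-neighborhoods of the roots induces a bijection on closed walks of length $k$ from the root; everything else is bookkeeping. A minor technical point to check en route is that $A(G)$ is indeed essentially self-adjoint on $\ell^2(V(G))$ for $[G;o]\in\mathscr{G}_*^D$, which is standard for bounded-degree graphs since $A(G)$ is a bounded symmetric operator with operator norm $\leq D$.
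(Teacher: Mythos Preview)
Your proof is correct and is essentially the standard moment argument for this result. Note that the paper does not actually supply its own proof of this proposition: it is stated as a recollection of \cite[Theorem 2.1]{salez} and used as a black box, so there is nothing to compare against beyond observing that your approach is the classical one.
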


\section{Proof of \eqref{e:vdansuncycle}} \label{a:cycles}
The proof of \eqref{e:vdansuncycle} is contained in the proof of \cite[Lemma 27]{bordenavefriedman} except for the last line. But reading the proof of \cite[Lemma 27]{bordenavefriedman} requires one to be familiar with the notation of \cite{bordenavefriedman}. To keep the present paper self-contained, we repeat here this proof, very mildly modified to show \eqref{e:vdansuncycle}.

In the sequel, a graph is seen as a quadruple $G=(V,\vec{E},\iota,o)$ where $V$ and $\vec{E}$ are countable sets (respectively the set of vertices and half-edges), $o:\vec{E}\rightarrow V$ is a map and $\iota:\vec{E}\rightarrow \vec{E}$ is a map satisfying $\iota^2(e)=e$ and $\iota(e)\neq e$ for any $e\in\vec{E}$. Thus, $\iota$ defines an equivalence classes on $\vec{E}$, $e\sim f$ if and only if $e=\iota(f)$ with two elements in each equivalence class. An equivalence class is called an edge, the edge set is denoted by $E$.  We interpret $o(e)$ as the origin vertex of the directed edge $e$ and $t(e)=o(\iota(e))$ as the end vertex of $e$.

Let us reformulate the defintion of $n$-lifts provided in Definition \ref{d:randomnlift}. For an integer $n\geq 1$, let $S_n(G)$ be the family of permutations $(\sigma_e)_{e\in\vec{E}}$ such that $\sigma_{\iota(e)}=\sigma_e^{-1}$. A $n$-lift of $G$ is a graph $G_n=(V_n,\vec{E}_n,\iota_n,o_n)$ such that 
$$
V_n=V\times[n], \qquad \vec{E}_n=\vec{E}\times[n]
$$
and, for some $\sigma\in S_n(G)$, for any $(e,i)\in \vec{E}_n$
$$
\iota_n(e,i)=(\iota(e),\sigma_e(i)) \quad \text{and} \quad o_n(e,i)=(o(e),i).
$$

For $\mathbf{v}=(v,i)\in V_n$, we set 
$$
\vec{E}_n(v)=\{\mathbf{e}\in \vec{E}_n: o_n(\mathbf{e})=\mathbf{v}\}=\{(e,i):o(e)=v\}.
$$
We fix $\mathbf{v}\in V_n$ and we explore its neighborhood step by step. We start with $A_0=\vec{E}_n(\mathbf{v})$. At stage $t\geq 0$, if $A_t$ is not empty, we pick $\mathbf{e}_{t+1}=(e_{t+1},i_{t+1})$ in $A_t$ with $o_n(\mathbf{e}_{t+1})$ at minimal graph distance from $\mathbf{v}$ (we break ties with lexicographic order). We set $\mathbf{f}_{t+1}=\iota_n(\mathbf{e}_{t+1})=(\iota(e_{t+1}),\sigma_{e_{t+1}}(i_{t+1}))$. If $\mathbf{f}_{t+1}\in A_t$, we set $A_{t+1}=A_t\setminus\{ \mathbf{e}_{t+1},\mathbf{f}_{t+1}\}$, and, otherwise, 
$$
A_{t+1}=\left(A_t\cup \vec{E}_n(o_n(\mathbf{f}_{t+1}))\right) \setminus \{\mathbf{e}_{t+1},\mathbf{f}_{t+1}\}.
$$
At stage $\tau\leq n |\vec{E}|$, $A_\tau$ is empty, and we have explored the connected component of $\mathbf{v}$. Before stage 
$$
T=\sum_{s=1}^{h-1}D(D-1)^{s-1}=O\left((D-1)^h\right),
$$
we have discovered the subgraph spanned by the vertices at distance at most $h$ from $\mathbf{v}$. Also, if $\mathbf{v}$ has a cycle in its $h$-neighborhood, then $S(\mathbf{v})=S_{\tau \wedge T} \geq 1$ where 
$$
S_t=\sum_{s=1}^t \varepsilon_s \quad \text{and}\quad \varepsilon_t = \mathbf{1}(\mathbf{f}_t\in A_{t-1})
$$
 for $t\geq 1$. At stage $t\geq 0$, for any $e\in\vec{E}$, at most $t$ values of $\sigma_e$ have been revealed and $|A_t|\leq D+(D-1)(t-1)$.

Let $\mathcal{F}_t$ be the $\sigma$-algebra generated by $(A_0,\ldots,A_t)$ and $\mathbb{P}_{\mathcal{F}_t}$ be its conditional probability distribution. Then, $\tau$ is a stopping time. Also, if $t<\tau\wedge T$, let $B_t=\{(\iota(e_{t+1}),i)\in A_t: i\in[n]\}$ and $n_t\leq t$ be the number of $s\leq t$ such that $\mathbf{f}_s$ or $\mathbf{e}_s$ is of the form $(\iota(e_{t+1}),i)$, $i\in[n]$. We find
$$
\mathbb{P}_{\mathcal{F}_t}(\varepsilon_{t+1}=1)=\frac{|B_t|}{n-n_t}\leq \frac{DT}{n}=q.
$$
Hence, from the union bound, taking $h=\lfloor c\log n\rfloor$, we obtain
$$
\mathbb{Q}_n(A_{\mathbf{v}}^{(G_n)})\leq \mathbb{P}(S(\mathbf{v})\geq 1)\leq qT=O\left(\frac{(D-1)^{2h}}{n}\right).
$$
which concludes the proof of \eqref{e:vdansuncycle}.


\end{document}